\newcommand{\dd}{\mathrm{d}}
\DeclareMathOperator{\sgn}{sgn}
\DeclareMathOperator{\dvol}{dvol}
\DeclareMathOperator{\Ric}{Ric}
\newcommand{\com}[1]{}
\DeclareMathOperator{\vol}{vol}
\newcommand{\sph}{\mathcal{S}}
\newcommand{\R}{\mathbb{R}}
\newcommand{\J}{\mathcal{J}}
\author{}
\title{A local Blaschke-Petkantschin formula in a Riemannian manifold}
\date\today
\newtheorem{thm} {Theorem}[section]
\newtheorem{lem} [thm] {Lemma}
\newtheorem{prop} [thm] {Proposition}
\begin{document}
\author{Aur\'elie Chapron\textsuperscript{1}}
\footnotetext{\textsuperscript{2} Laboratoire MODAL'X, EA 3454, Universit\'e Paris Nanterre, 200 avenue de la R\'e\-pu\-bli\-que, 92001 Nanterre, France. E-mail: {\tt achapron@parisnanterre.fr}}
\maketitle

\begin{abstract}
In this paper, we show a local Blaschke-Petkantschin formula for a Riemannian manifold. Namely, we compute the Jacobian determinant of the parametrization of $(n+1)$-tuples of the manifold by the center and the radius of their common circumscribed sphere as well as the $(n+1)$ directions characterizing the positions of the $n+1$ points on it. We deduce from it a more explicit two-term expansion when the radius tends to $0$. This formula contains a local correction with respect to the flat case which involves the Ricci curvatures in the $(n+1)$ directions.
\end{abstract}

\section{Introduction and result}
A Blaschke-Petkantschin formula is a rewriting of the $(m+1)$-fold product of the volume measure for $1\le m\le n$ which is based on a suitable geometric decomposition of a $m$-tuple of points, see the historical papers \cite{Bla35} and \cite{Pet35}. In particular, it provides the calculation of the Jacobian of the associated change of variables in an integral. Classically, the geometric decomposition consists in fixing the linear or affine subspace which contains all the points, integrate over all $m$-tuples in that subspace and then integrate over the Grassmannian of all subspaces, see \cite[Chapter 7]{Wei08}. There exists another kind of formulas of Blaschke-Petkantschin type with a spherical decomposition: the sphere containing all the points is fixed, we integrate over the positions of the points on the sphere and then integrate over the Grassmannian of all subspaces spanned by the sphere and over the radius and center of the sphere. 

This can be described as follows in the Euclidean space. For almost every set of points $x_0,\dots,x_n \in \R^n$, there exists a unique circumscribed ball. This makes it possible to define the change of variables $(x_0,\dots,x_n) \to (r,z,u_0,\dots,u_n)$, given by the relations 
\[ x_i = z+ru_i \]
where $z\in \R^n$ and $r>0$ are the circumcenter and radius respectively of the circumscribed ball of $x_0,\dots,x_n$
and where $u_0,\dots,u_n\in \sph^{n-1}$ indicate the respective positions of the points $x_0,\dots,x_n$ on the boundary of that ball, $\sph^{n-1}$ being the unit-sphere of $\R^n$. The calculation of the Jacobian of this change of variables dates back to Miles \cite[Formula (70)]{Mil70}. It provides the following equality of measures, see also \cite[Proposition 2.2.3]{Mol94}
for a more general statement with $(m+1)$ points, $1\le m\le n$:
\begin{equation}\label{eucl} \dd x_0\dots \dd x_n =  n! \Delta_n(u_0,u_1,\dots,u_n)r^{n^2-1}\dd r \dd z\dvol^{(\sph^{n-1})}(u_0)\dots \dvol^{(\sph^{n-1})}(u_n) \end{equation}
where $\Delta_n(u_0,u_1,\dots,u_n)$ is the $n$-dimensional Lebesgue measure of the simplex spanned by the unit vectors $u_0,u_1,\dots,u_n$ and where $\vol^{(\sph^{n-1})}$
is the spherical Lebesgue measure of ${\sph^{n-1}}$. In general, for any Riemannian manifold $M$, we denote by $\vol^{(M)}$ the measure associated with the Riemannian volume form of $M$. 

The formula \eqref{eucl} was extended by Miles to the case of the unit-sphere
in dimension two \cite[Theorem 3.2]{Mil71} then in any dimension \cite[Theorem 4]{Mil71bis}. Let ${\mathcal S}_{k}^{n}$ be the $n$-dimensional sphere centered at the origin and of radius $1/k$. Again, for almost all $(n+1)$-tuple of points in ${\mathcal S}_{k}^{n}$, we denote by $z$ and $r\in (0,\frac{\pi}{2})$ the center and radius respectively of the associated geodesic circumscribed ball and by $u_0,\dots,u_n$ the respective positions of the points on the boundary of that ball.
The equality of measures below is then a slight modification of Miles's formula when replacing $\sph^{n}$ with ${\mathcal S}_{k}^{n}$:
 
\begin{align}\label{sph} 
&\dvol^{(\sph_k^n)}(x_0)\dots \dvol^{(\sph_k^n)}(x_n)\nonumber\\&\hspace*{1cm}=  n! \Delta_n(u_0,u_1,\dots,u_n)\left(\frac{\sin(kr)}{k}\right)^{n^2-1}\dd r\dvol^{(\sph_k^n)}(z) \dvol^{({\sph^{n-1}})}(u_0)\dots \dvol^{({\sph^{n-1}})}(u_n).\end{align}

The aim of this paper is to extend the spherical Blaschke-Petkantschin formulas \eqref{eucl} and \eqref{sph} to a general Riemannian manifold $M$, i.e. decompose the product measure $\dvol^{(M)}(x_0)\dots \dvol^{(M)}(x_n)$ through a geometric decomposition based on the smallest geodesic circumscribed ball containing the $(n+1)$ points $x_0,\dots,x_n$. One of our goals is to use this formula for calculating mean values of Poisson-Voronoi tessellations in a manifold \cite{AurelieArticle}. Indeed, Blaschke-Petkantschin formulas are already a key tool to study the characteristics of such tessellations in the Euclidean case \cite{Mol94} and in the spherical case \cite{Mil71}. In the hyperbolic case, Isokawa used for $n=2$ \cite{Iso2000} and $n=3$ \cite{Iso00b} formulas of the flavour of Blaschke-Petkantschin even though they were not clearly stated. 

We start by completing the work for manifolds with constant sectional curvature. For $k>0$, let ${\mathcal H}_{k}^{n}$ be the $n$-dimensional hyperbolic space of curvature $-k^2$. The next proposition provides the spherical Blaschke-Petkantschin formula when $M={\mathcal H}_{k}^{n}$.
\begin{prop}\label{prop:hyperb} 
For every $k>0$ and $n\ge 1$, the following equality of measures is satisfied.
\begin{align}\label{hyp} 
&\dvol^{({\mathcal H}_k^n)}(x_0)\dots \dvol^{({\mathcal H}_k^n)}(x_n)\nonumber\\&\hspace*{1cm}=  n! \Delta_n(u_0,u_1,\dots,u_n)\left(\frac{\sinh(kr)}{k}\right)^{n^2-1}\dd r\dvol^{({\mathcal H}_k^n)}(z) \dvol^{({\sph^{n-1}})}(u_0)\dots \dvol^{({\sph^{n-1}})}(u_n).
\end{align}
 \end{prop}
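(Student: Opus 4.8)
The plan is to compute the Jacobian of the parametrization directly, working in geodesic polar coordinates and exploiting the fact that in constant curvature $-k^2$ all Jacobi fields are explicit. Identify each tangent space $T_z\mathcal{H}_k^n$ with $\R^n$ via an orthonormal frame, and describe the boundary of the geodesic circumscribed ball of center $z$ and radius $r$ as the image of $\sph^{n-1}$ under $u\mapsto \exp_z(ru)$. The change of variables is then $\Phi\colon (z,r,u_0,\dots,u_n)\mapsto (x_0,\dots,x_n)$ with $x_i=\exp_z(ru_i)$, and the statement amounts to $|\det \dd\Phi| = n!\,\Delta_n(u_0,\dots,u_n)\left(\frac{\sinh(kr)}{k}\right)^{n^2-1}$ with respect to the product Riemannian volumes.

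First I would record the three types of variation entering $\dd\Phi$, each governed by a Jacobi field along the radial geodesic $\gamma_i(t)=\exp_z(tu_i)$. Moving $u_i$ inside $\sph^{n-1}$ (with $z,r$ fixed) produces a Jacobi field with $J(0)=0$, hence $J(t)=\frac{\sinh(kt)}{k}E(t)$ with $E$ parallel; it contributes, in the $n-1$ tangential directions at $x_i$, the scaling $\frac{\sinh(kr)}{k}$. Moving $r$ produces the unit radial vector $\gamma_i'(r)$. Moving the center $z$, with each $u_i$ kept parallel, produces a Jacobi field with $J(0)=\delta z$ and $J'(0)=0$: its radial component $\langle \delta z,u_i\rangle$ is transported with coefficient $1$, while its tangential component is scaled by $\cosh(kr)$.

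With these inputs, $\dd\Phi$ has a block form in which the shared columns (the $n$ directions of $\delta z$ and the single $\delta r$) couple to every point, whereas the $u_i$-columns ($n-1$ of them) touch only the tangential rows of the $i$-th block. I would then compute the determinant by expanding along the $u$-columns: since block $i$ has exactly $n-1$ tangential rows and $n-1$ associated $u_i$-columns carrying the entries $\frac{\sinh(kr)}{k}$, every surviving permutation is forced to pair the tangential rows of block $i$ with its own $u_i$-columns. This produces the global factor $\left(\frac{\sinh(kr)}{k}\right)^{(n+1)(n-1)}=\left(\frac{\sinh(kr)}{k}\right)^{n^2-1}$ and deletes all tangential rows and all $u$-columns. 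Crucially, the $\cosh(kr)$ terms carried by the center variation sit in the $\delta z$-columns of those same tangential rows and are therefore never used, so they vanish from the answer.

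What remains is the reduced $(n+1)\times(n+1)$ determinant of the radial rows against the $(\delta z,\delta r)$-columns, whose $i$-th row is $(u_i^{\top},1)$; its absolute value is exactly $n!\,\Delta_n(u_0,\dots,u_n)$, as in the flat formula \eqref{eucl}. Combined with $\dvol^{(\mathcal{H}_k^n)}(z)$ this gives \eqref{hyp}. The main obstacle is the step just described: one must check carefully that the $\cosh(kr)$-scaling introduced by the center variation genuinely drops out, since this is the only place where the curved computation could deviate from the Euclidean and spherical patterns. As a consistency check I would note that the formal substitution $k\mapsto ik$, under which $\frac{\sin(kr)}{k}\mapsto \frac{\sinh(kr)}{k}$, carries the spherical identity \eqref{sph} into \eqref{hyp}; making this continuation rigorous would require a common analytic family of space forms, so I would present the direct Jacobi-field computation as the actual proof.
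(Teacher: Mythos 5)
Your proposal is correct and follows essentially the same route as the paper: the paper proves Proposition \ref{prop:hyperb} by specializing the general Jacobian formula \eqref{Jac} of Theorem \ref{BP} (whose proof uses exactly your block decomposition, the vanishing of the radial component of the $u_i$-variations, and the irrelevance of the lower-left ``n/a'' block containing the center-variation terms) and then substituting the explicit hyperbolic Jacobi field $\tilde J^{(i)}_m(r)=\frac{\sinh(kr)}{k}v^{(i)}_m(r)$ of Lemma \ref{lem:JFhyper} to get $B^{(i)}=\frac{\sinh(kr)}{k}\mathrm{Id}_{n-1}$. The only difference is presentational: you inline the derivation of \eqref{Jac} in the constant-curvature case and make the unused $\cosh(kr)$ entries explicit, whereas the paper performs a row permutation to a block lower triangular matrix and never needs to compute them.
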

 
Now let $M$ be a Riemannian manifold of dimension $n$ and $\vol^{(M)}$ be its associated Riemannian measure. For any $z\in M$, we denote by $T_zM$ the tangent space of $M$ at $z$, by $\langle\cdot,\cdot\rangle_z$ the associated scalar product and by $\exp_z$ the exponential map at $z$. Since the geometric transformation and the Jacobian calculation will be done in a vicinity of a fixed point $z$, we can assume that $M$ is a compact set even if it means replacing $M$ by a compact neighborhood of $z$. We recall that thanks to the Hopf-Rinow theorem, the compacity of $M$ guarantees that $M$ is geodesically complete, which implies that the exponential map $\exp_{z}$ is defined on the whole tangent space $T_{z}M$.  Let us now define the set 
\[ \mathbb{T}_nM = \{ (z,u_0,\dots,u_n) \text{ such that }z\in M, u_i \in T_z M \mbox{ with norm $1$},i=0 ,\dots,n \} \]
and let us introduce the function 
\[\Phi_n:\left\{\begin{array}{ll}~(0,\infty) \times \mathbb{T}_nM & \to M^{n+1}\\
		(r,z,u_0, \dots,u_n) &\mapsto (x_0,\dots,x_n)\end{array}\right.\]
where
\begin{equation}\label{change}
x_i = \exp_z(ru_i). 
\end{equation}

For fixed $(z,u_0,\dots,u_n)\in {\mathbb T}_nM$ and fixed $1\le i\le n$, we denote by $\gamma_i$ the geodesic $\gamma_i:t\mapsto \exp_z(tu_i)$ so that $x_i=\gamma_i(r)$. We consider an orthonormal basis of $T_zM$, $\mathcal{V}^{(i)}=\{v_1^{(i)},\dots,v_{n}^{(i)}\}$ where $v_1^{(i)} = u_i$. The orthonormal basis of $T_{x_i} M$ denoted by $\mathcal{V}^{(i)}(r)=\{v_0^{(i)}(r),\dots,v_{n-1}^{(i)}(r)\}$ is obtained by parallel transport of $\mathcal{V}^{(i)}$ along $\gamma_i$.

We denote by $\J_{\Phi_n}$ the Jacobian of $\Phi_n$, i.e. the function which satisfies the equality of measures
$$\dvol^{(M)}(x_0)\dots \dvol^{(M)}(x_n)=  |\J_{\Phi_n}(r,z,u_0,\dots,u_n)|\dd r\dvol^{(M)}(z) \dvol^{({\sph^{n-1}})}(u_0)\dots \dvol^{({\sph^{n-1}})}(u_n).$$ 
We provide a formula for $\J_{\Phi_n}$ in terms of several explicit Jacobi fields. To do so, we use a precise connection between Jacobi fields and derivatives of curves defined through the exponential map, see Lemma \ref{lem:exodocarmo} below. 
\begin{thm}\label{BP}
The Jacobian determinant $\J_{\Phi_n}$ of $\Phi_n$  satisfies
\begin{equation}\label{Jac} |\J_{\Phi_n}(z,r,u_0,u_1,\dots,u_n)| = n! \Delta_n(u_0,\dots,u_n) \prod_{i=0}^{n} |\det B^{(i)}|
\end{equation}
where for any $0\le i\le n$, $B^{(i)}=(B^{(i)}_{l,m})_{1\le l,m\le (n-1)}$,  is a $(n-1)\times(n-1)$ real matrix which satisfies for $l,m=1,\dots,n-1$, \begin{align*}
				B^{(i)}_{l,m}& = \langle v_{l+1}^{(i)}(r), \tilde{J}_{m+1}^{(i)}(r)\rangle_{\gamma_i(r)},
			\end{align*}
$\tilde{J}_{m}^{(i)}$ being the Jacobi field along $\gamma_i$ with $ \tilde{J}_m^{(i)}(0)=0$ and ${\tilde{J}_m^{(i)\prime}}(0)= v_m^{(i)}$

{
Moreover, when $r$ tends to 0, 
\begin{equation}\label{dev} |\J_{\Phi_n}(z,r,u_0,u_1,\dots,u_n)| = n! \Delta_n(u_0,u_1,\dots,u_n)\left( r^{n^2-1} - \frac{\sum_{i=0}^{n} \Ric_z(u_i)}6 r^{n^2+1} + o(r^{n^2+1}) \right) \end{equation}
 where  $\Ric_{z}(u_i)$ denotes the Ricci curvature at $z$ of $u_i$ and $f(r)=o(g(r))$ means that $\lim_{r\to0}\frac{f(r)}{g(r)}=0$.}
\end{thm}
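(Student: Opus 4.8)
The proof divides naturally into two parts: the exact Jacobian formula \eqref{Jac} and the small-$r$ expansion \eqref{dev}. For the exact formula, the strategy is to compute the differential of $\Phi_n$ block by block. Since each output point $x_i = \exp_z(ru_i)$ depends on $r$, on the basepoint $z$, and on the direction $u_i$, the key is to express the partial derivatives of $x_i$ with respect to these parameters as values of Jacobi fields along the geodesic $\gamma_i$. This is exactly the role of Lemma \ref{lem:exodocarmo}: variations of the exponential map are Jacobi fields, so $\partial x_i/\partial(\text{direction})$ and $\partial x_i/\partial z$ become the Jacobi fields $\tilde{J}^{(i)}_m(r)$ with the stated initial conditions $\tilde{J}^{(i)}_m(0)=0$, ${\tilde{J}^{(i)\prime}_m}(0)=v^{(i)}_m$, while the $r$-derivative contributes the velocity $u_i$ transported to $\gamma_i(r)$. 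I would read off the matrix of $d\Phi_n$ in the orthonormal frames $\mathcal{V}^{(i)}(r)$, observe that it is block upper-triangular once one accounts correctly for the shared dependence on $z$, and factor the determinant. The radial directions and the simplex volume $\Delta_n(u_0,\dots,u_n)$ account for the combinatorial prefactor $n!\,\Delta_n$ exactly as in the Euclidean computation, while the remaining $(n-1)\times(n-1)$ blocks $B^{(i)}$ record the spherical (tangential) part, giving $\prod_i|\det B^{(i)}|$.

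For the expansion \eqref{dev}, the plan is to Taylor-expand each $\det B^{(i)}$ in $r$ using the standard small-$r$ behaviour of Jacobi fields vanishing at $0$. Writing $\tilde{J}^{(i)}_m(r)$ along $\gamma_i$, the Jacobi equation gives the expansion $\tilde{J}^{(i)}_m(r) = r\,v^{(i)}_m(r) - \frac{r^3}{6}R(v^{(i)}_m,u_i)u_i + o(r^3)$ in the parallel-transported frame, where $R$ is the curvature tensor and $v^{(i)}_m(r)$ is the parallel transport of $v^{(i)}_m$. Hence $B^{(i)}_{l,m} = \langle v^{(i)}_{l+1}(r),\tilde{J}^{(i)}_{m+1}(r)\rangle = r\,\delta_{l,m} - \frac{r^3}{6}\langle v^{(i)}_{l+1}, R(v^{(i)}_{m+1},u_i)u_i\rangle + o(r^3)$, using that parallel transport preserves inner products. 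Thus $B^{(i)} = r\,(I - \frac{r^2}{6}K^{(i)} + o(r^2))$ where $K^{(i)}$ is the $(n-1)\times(n-1)$ matrix with entries $\langle v^{(i)}_{l+1}, R(v^{(i)}_{m+1},u_i)u_i\rangle$.

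Taking determinants, $|\det B^{(i)}| = r^{n-1}(1 - \frac{r^2}{6}\operatorname{tr} K^{(i)} + o(r^2))$, and the crucial identification is that $\operatorname{tr} K^{(i)} = \sum_{m=1}^{n-1}\langle v^{(i)}_{m+1}, R(v^{(i)}_{m+1},u_i)u_i\rangle = \Ric_z(u_i)$, since summing the sectional-curvature-type terms over an orthonormal basis of $u_i^\perp$ is precisely the definition of the Ricci curvature in the direction $u_i$ (the missing $m$ corresponding to $v^{(i)}_1=u_i$ contributes $R(u_i,u_i)u_i=0$). Multiplying over $i=0,\dots,n$ gives $\prod_i|\det B^{(i)}| = r^{n(n-1)}\bigl(1 - \frac{r^2}{6}\sum_i \Ric_z(u_i) + o(r^2)\bigr)$, and combining with the $n!\,\Delta_n$ prefactor and noting $n(n-1)+(n-1)=n^2-1$ yields \eqref{dev}. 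I expect the main obstacle to be the first part: carefully organizing the block structure of $d\Phi_n$ and justifying that the determinant factors as claimed, in particular handling the $z$-dependence consistently across all $n+1$ points and confirming that the radial/simplex factor really separates out as $n!\,\Delta_n$; the expansion is then a routine consequence of the Jacobi field asymptotics and the definition of Ricci curvature.
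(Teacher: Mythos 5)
Your proposal follows essentially the same route as the paper: express the blocks of $d\Phi_n$ as Jacobi fields via Lemma \ref{lem:exodocarmo}, permute rows so that the determinant factors as the simplex determinant $n!\,\Delta_n(u_0,\dots,u_n)$ times $\prod_i|\det B^{(i)}|$, and then Taylor-expand the Jacobi fields to order $r^3$ to identify the trace of the curvature block with $\Ric_z(u_i)$ (the paper does this entrywise on the scalar products rather than on the vector-valued Jacobi field, a purely cosmetic difference). The only point to tighten is your description of the $z$-derivatives: these are the Jacobi fields with $J_m^{(i)}(0)=e_m$ and ${J_m^{(i)\prime}}(0)=0$, not the ones vanishing at $0$, and the exact --- not merely asymptotic --- separation of the factor $n!\,\Delta_n$ rests on the identity $\langle \gamma_i'(r),J_m^{(i)}(r)\rangle_{x_i}=\langle u_i,e_m\rangle_z$, which the paper obtains from \cite[Chapter 5, Proposition 3.6]{Do92}.
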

 
	As expected, since the manifold can be approximated at first order by the tangent space at $z$, the first term of the expansion \eqref{dev} corresponds to the Euclidean case. Moreover, when $M$ has constant sectional curvature, \eqref{dev}  is consistent with both \eqref{sph} and \eqref{hyp} for small $r$. 
	Let us note that for $r$ small enough, the Jacobian given by \eqref{Jac} is different from zero almost everywhere which implies thanks to the inverse function theorem that $\Phi_n$ is a local $C^1$-diffeomorphism. 
	
	As emphasized earlier, the Euclidean spherical Blaschke-Petkantschin formula was extended to provide the geometric decomposition  of the $(m+1)$-fold product of the Lebesgue measure on $\R^n$, for any $m=1, \dots,n$. We recall below this general formula, see e.g. \cite[Proposition 2.2.3]{Mol94}. Let $G(n,m)$ denote the Grassmanian of $m$-dimensional space of $\R^{n}$ endowed with its Haar measure $\vol^{(G(n,m))}$ and let us consider the set 
	\[ \mathbb{G}(n,m)=\{ (L_m, u_0, \dots, u_m), L_r \in G(n,m), u_i \in L_m \mbox{ with norm 1}\}. \]Then the change of variables defined by
	\[
	  \left\{\begin{array}{ll}\R_+ \times \R^{n} \times \mathbb{G}(n,m) &\to (\R^n)^{m+1}\\
										(r,z,L_m,u_0,\dots,u_m) &\mapsto (x_0,\dots,x_m)
	\end{array}\right.\]
	with $x_i= z+ru_i$, satisfies the equality of measures
	    \[ \dd x_0\dots \dd x_m= c_m^{(n)}(m! \Delta_m(u_0,\dots,u_m))^{n-m+1} r^{nm-1} \dd r\dd z \dvol^{(G(n,m))}(L_m)\dvol^{({\sph^{n-1}})}(u_0)\dots \dvol^{({\sph^{n-1}})}(u_m) \]
	where $c_m^{(n)}$ is an explicit constant depending only on $n$ and $m$. 

Going back to our manifold setting, it seems delicate to extend Theorem \ref{BP} to the decomposition of an $(m+1)$-fold measure for any $m$. 
However, we are able to derive formulas of this type for the manifold $M$ in the special cases $m=1$ and $m=(n-1)$ in Propositions \ref{deux} and \ref{np} respectively. Indeed, in these particular cases, we can identify the Grassmanian with the unit sphere ${\sph^{n-1}}$ and this makes it possible to write expansions of the Jacobian determinant. 
	\begin{prop}[Case $m=1$] \label{deux}
	The Jacobian $\J_{\Phi_1}$ of the function 
	\[\Phi_1 :\left\{\begin{array}{ll}  \R_+ \times {\mathbb T}_1M &\to M^2\\
										(r,z,u) &\mapsto (x_0=\exp_z(ru), x_1=\exp_z(-ru))\end{array}\right. \]
	satisfies
	\begin{equation}\label{eq2} |\J_{\Phi_1}(r,z,u)|= 2^n ( r^{n-1} - \frac 23 \Ric_z(u)r^{n+1} + o(r^{n+1}) )\end{equation}
	
	\end{prop}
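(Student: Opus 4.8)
The plan is to compute the differential of $\Phi_1$ directly in well-chosen orthonormal frames and read off its determinant, rather than to specialize Theorem \ref{BP} (whose map $\Phi_n$ carries $n+1$ independent directions, while here the two points are forced to be antipodal). On the source side, at a point $(r,z,u)$ I would use the frame adapted to the product measure $\dd r\,\dvol^{(M)}(z)\,\dvol^{(\sph^{n-1})}(u)$: the vector $\partial_r$; the horizontal lifts $H_1,\dots,H_n$ corresponding to moving $z$ along an orthonormal basis $v_1=u,v_2,\dots,v_n$ of $T_zM$ while parallel-transporting $u$; and the vertical vectors $V_2,\dots,V_n$ corresponding to rotating $u$ inside $\sph^{n-1}$ in the directions $v_2,\dots,v_n$. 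On the target side I would use, in each factor $T_{x_i}M$, the orthonormal basis obtained by parallel transport of $\{v_1,\dots,v_n\}$ along the geodesic $\gamma_i$, with $\dot\gamma_0(0)=u$ and $\dot\gamma_1(0)=-u$. Lemma \ref{lem:exodocarmo} then identifies each column of $d\Phi_1$ with a Jacobi field evaluated at $r$.

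Concretely I expect $\partial_r\mapsto(\dot\gamma_0(r),\dot\gamma_1(r))$; each $H_j\mapsto(J_0^{[j]}(r),J_1^{[j]}(r))$ with $J_i^{[j]}$ the Jacobi field along $\gamma_i$ satisfying $J(0)=v_j$, $J'(0)=0$; and each $V_k\mapsto(\tilde J_k^{(0)}(r),-\tilde J_k^{(1)}(r))$ with $\tilde J_k^{(i)}(0)=0$, $\tilde J_k^{(i)\prime}(0)=v_k$. The decisive simplification is that the two \emph{radial} columns $\partial_r$ and $H_1$ give tangential Jacobi fields: a Jacobi field with initial value proportional to $\dot\gamma$ and vanishing derivative stays equal to $\pm\dot\gamma$, so these columns are exactly $(\dot\gamma_0(r),\dot\gamma_1(r))$ and $(\dot\gamma_0(r),-\dot\gamma_1(r))$, with no component along any $v_l$, $l\ge2$. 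Ordering the rows so that the directions $\dot\gamma_0(r),\dot\gamma_1(r)$ come first makes the matrix of $d\Phi_1$ block upper-triangular, and its determinant factors exactly as the radial block $\left(\begin{smallmatrix}1&1\\1&-1\end{smallmatrix}\right)$, of determinant $-2$, times the $(2n-2)\times(2n-2)$ spherical block $D$ built from the $H_k,V_k$ with $k\ge2$. Hence $|\J_{\Phi_1}|=2\,|\det D|$.

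It then remains to expand $\det D$. Factoring one $r$ out of each of the $n-1$ columns coming from the $V_k$ gives $\det D=r^{n-1}\det\tilde D$, and the standard Taylor expansions of Jacobi fields (the field with $J(0)=v_k$, $J'(0)=0$ being $v_k-\frac{r^2}{2}R(v_k,u)u+o(r^2)$ after parallel transport, and the field with $J(0)=0$, $J'(0)=v_k$ being $r\,v_k-\frac{r^3}{6}R(v_k,u)u+o(r^3)$) yield $\tilde D=\tilde D_0+r^2\tilde D_2+o(r^2)$, where $\tilde D_0$ is block-diagonal with $2\times2$ blocks $\left(\begin{smallmatrix}1&1\\1&-1\end{smallmatrix}\right)$. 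Writing $\det\tilde D=\det\tilde D_0\,(1+r^2\operatorname{tr}(\tilde D_0^{-1}\tilde D_2)+o(r^2))$ and computing the trace block by block, using $R_{k'k'}=\langle R(v_{k'},u)u,v_{k'}\rangle$ and $\sum_{k'\ge2}R_{k'k'}=\Ric_z(u)$, I expect $\operatorname{tr}(\tilde D_0^{-1}\tilde D_2)=-\tfrac23\Ric_z(u)$, whence $|\J_{\Phi_1}|=2\cdot2^{n-1}r^{n-1}(1-\tfrac23\Ric_z(u)r^2+o(r^2))$, which is exactly \eqref{eq2}.

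The main obstacle is the bookkeeping around the moving base point. One must use the horizontal lift (parallel transport of $u$ as $z$ varies) so that the source frame is orthonormal for the product measure $\dd r\,\dvol^{(M)}(z)\,\dvol^{(\sph^{n-1})}(u)$ — equivalently, that this measure is the Liouville volume on the unit tangent bundle — and one must track the opposite initial velocities $\pm u$ of $\gamma_0$ and $\gamma_1$, which is what makes the curvature entries agree across the two rows of each spherical block (the entry is unchanged under $u\mapsto-u$ since $R(v,-u)(-u)=R(v,u)u$). The only genuine computation is then checking that the off-diagonal curvature terms $R_{k'k}$ with $k'\ne k$ drop out of the trace and only the diagonal sectional curvatures survive, but this is forced by the block-diagonal form of $\tilde D_0^{-1}$.
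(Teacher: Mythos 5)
Your proposal is correct and follows essentially the same route as the paper: the same parallel-transported frames, the same identification of the columns of $d\Phi_1$ with Jacobi fields via Lemma \ref{lem:exodocarmo}, the same observation that the two radial columns are tangential Jacobi fields — giving the factor $\bigl|\det\left(\begin{smallmatrix}1&1\\1&-1\end{smallmatrix}\right)\bigr|=2$ and the reduction to a $2(n-1)\times 2(n-1)$ block $D$ — and the same Taylor expansions $1-\tfrac{K}{2}r^2+o(r^2)$ and $r-\tfrac{K}{6}r^3+o(r^3)$ of its entries (Lemma \ref{coef}). The only divergence is in the last algebraic step: you reorder $D$ into $2\times2$ blocks, factor out $r^{n-1}$, and apply $\det(\tilde D_0+r^2\tilde D_2)=\det\tilde D_0\,\bigl(1+r^2\operatorname{tr}(\tilde D_0^{-1}\tilde D_2)\bigr)+o(r^2)$ with the per-block trace $-\tfrac{R_{kk}}{2}-\tfrac{R_{kk}}{6}=-\tfrac{2}{3}R_{kk}$, whereas the paper splits the Leibniz sum into the dominant family $\mathfrak{S}$ of $2^{n-1}$ permutations plus a negligible remainder; both yield the coefficient $-\tfrac23\Ric_z(u)$, and your trace device is exactly the one the paper itself deploys later in Lemma \ref{lem:detG}.
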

In particular, we notice that the first term of the expansion in \eqref{eq2} is exactly the Jacobian in the Euclidean case since $\Delta_1(u,-u)$ is equal to $2$. 

Let us define the set
$${\mathbb T}_n^{\bot}M=\{ (z,v,u_0,\dots,u_{n-1}), x\in M, v,u_0,\dots,u_{n-1}\in T_zM \mbox{ with norm $1$ and $v\bot \{u_0,\dots,u_{n-1}\}$} \}.$$
	\begin{prop}[Case $m=(n-1)$]\label{np}
	The Jacobian $\J_{\Phi_{n-1}}$ of the function 
	
	\[ \Phi_{n-1} : \left\{\begin{array}{ll}  \R_+ \times {\mathbb T}_n^{\bot}M &\to M^n\\
										(r,z,v,u_0,\dots,u_{n-1}) & \mapsto ( x_i=\exp_z(ru_i))_{i=0,\dots,n-1} \end{array}\right.\]

	satisfies
	\begin{align}\label{eqn} & |\J_{\Phi_{n-1}}(z,r,v,u_0,\dots,u_{n-1})|\nonumber\\&=((n-1)! \Delta_{n-1}(u_0, \dots, u_{n-1}) )^2\left( r^{n(n-1)-1} - r^{n(n-1)+1}( \sum_{i=0}^{n-1} \Ric_z^{v}(u_i) + \mathrm{Tr}( \Delta^{-1}K) ) +o( r^{n(n-1)+1}) \right)\end{align}
where 

\[\Delta=  \begin{pmatrix} 1& -u_0^{T} \\
																			\vdots &\vdots \\
																			1 & -u_{n-1}^{T}
																			\end{pmatrix}   \text{ and } K= \begin{pmatrix} \frac{K(u_0,v)}{2}& u_0^{T}\frac{K(u_0,v)}{6} \\
																			\vdots &\vdots \\
																			\frac{K(u_{n-1},v)}{2} & u_{n-1}^{T}\frac{K(u_{n-1},v)}{6}
																			\end{pmatrix}   \]
	
	\end{prop}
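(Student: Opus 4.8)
The plan is to reproduce, in this constrained setting, the computation behind Theorem \ref{BP}: differentiate $\Phi_{n-1}$, convert each partial derivative into a Jacobi field along the geodesics $\gamma_i:t\mapsto\exp_z(tu_i)$ by means of Lemma \ref{lem:exodocarmo}, and then analyse the resulting $n^2\times n^2$ determinant. The genuinely new ingredient is the normal vector $v$, which parametrizes the Grassmannian $G(n,n-1)$ identified with $\sph^{n-1}$ through $L=v^\perp$. I would first describe the tangent space of ${\mathbb T}_n^{\bot}M$: a tangent vector is a tuple $(\delta z,\delta v,\delta u_0,\dots,\delta u_{n-1})$ subject to $\langle v,\delta v\rangle=0$, $\langle u_i,\delta u_i\rangle=0$ and the coupling relation $\langle\delta u_i,v\rangle=-\langle u_i,\delta v\rangle$. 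Hence a variation of the hyperplane forces each $\delta u_i$ to acquire a component $-\langle u_i,\delta v\rangle\,v$ along the normal, and it is precisely this coupling that will generate the matrix $\Delta$.

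For each point $x_i=\exp_z(ru_i)$ I would split $T_{x_i}M$, via parallel transport along $\gamma_i$, into the radial line spanned by $u_i(r)$, the normal line spanned by $v(r)$, and the $(n-2)$-dimensional space $(v^\perp\cap u_i^\perp)(r)$. Correspondingly I would group the domain coordinates into $\partial_r$, the base-point variations $\partial_z$ (separated into the direction $v$ and the $n-1$ directions of $v^\perp$), the $n-1$ variations $\partial_v$ together with the induced normal components of the $\delta u_i$, and the $n(n-2)$ in-hyperplane variations of the $u_i$. Writing $A_w$ for the Jacobi field with $A_w(0)=w$, $A_w'(0)=0$ and $\tilde J_w$ for the one with $\tilde J_w(0)=0$, $\tilde J_w'(0)=w$, and using $A_w(r)=w-\tfrac{r^2}{2}R(w,u_i)u_i+o(r^2)$ and $\tilde J_w(r)=rw-\tfrac{r^3}{6}R(w,u_i)u_i+o(r^3)$, one checks that the radial component of every $A_w$ equals $\langle w,u_i\rangle$ exactly and that of every $\tilde J_w$ vanishes. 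Thus the $n$ radial rows are curvature-free and are supported only on $\partial_r$ and on the $v^\perp$-part of $\partial_z$, so the Jacobian matrix is block triangular. Its determinant factors as the product of the $n\times n$ radial block, with entries $1$ and $\langle w_k,u_i\rangle$ and determinant $\pm(n-1)!\,\Delta_{n-1}(u_0,\dots,u_{n-1})$, and the complementary block $D$ formed by the normal and in-hyperplane rows.

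It then remains to expand $\det D$, for which I would take the Schur complement with respect to its in-hyperplane sub-block. That sub-block is block diagonal with factors $B^{(i)}$ of the same nature as in Theorem \ref{BP} but of size $n-2$, so their determinants expand exactly as in \eqref{dev}, producing the Ricci curvature of $u_i$ restricted to the hyperplane, $\Ric_z^{v}(u_i)=\Ric_z(u_i)-K(u_i,v)$; this is the source of the term $\sum_i\Ric_z^{v}(u_i)$. The surviving normal block has $i$-th row $\bigl(1-\tfrac{r^2}{2}K(u_i,v),\,-r\,u_i^{T}(1-\tfrac{r^2}{6}K(u_i,v))\bigr)$; factoring $r$ out of its last $n-1$ columns turns it into $\Delta$ plus a second-order curvature matrix built from the quantities $K(u_i,v)$, and the first-order determinant expansion $\det(\Delta+r^2K)=\det\Delta\,(1+r^2\,\mathrm{Tr}(\Delta^{-1}K)+o(r^2))$ supplies at once the second factor $(n-1)!\,\Delta_{n-1}$ and the term $\mathrm{Tr}(\Delta^{-1}K)$. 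Collecting the powers of $r$ gives the exponent $n(n-2)+(n-1)=n(n-1)-1$ announced in \eqref{eqn}.

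The main obstacle is the curvature bookkeeping at order $r^2$. One must show that the off-diagonal blocks coupling the normal and the in-hyperplane directions, which are respectively of order $r^2$ and $r^3$, enter the Schur complement only at order $o(r^{\,n(n-1)+1})$ and therefore leave the two-term expansion unchanged; and one must track every constant so that the full Ricci curvature splits cleanly into the hyperplane part $\Ric_z^{v}(u_i)$ carried by the blocks $B^{(i)}$ and the transverse part recorded by $K$, with the precise normalization of the entries $\tfrac{K(u_i,v)}{2}$ and $\tfrac{K(u_i,v)}{6}$. I would finally pin down the remaining signs and scales by testing the formula in the constant-curvature case and against Proposition \ref{deux}, which is the instance $n=2$ (where $\Ric_z^{v}\equiv0$) and isolates the contribution of $K$.
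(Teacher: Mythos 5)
Your proposal follows essentially the same route as the paper: the same identification of each partial derivative with a Jacobi field via Lemma \ref{lem:exodocarmo} (including the key coupling $\bar J{}'(0)=-\langle u_i,\delta v\rangle\,v$ coming from the constraint $u_i\perp v$, which is exactly how the paper obtains its matrices $C^{(i)}$), the same block-triangular reduction isolating the factor $(n-1)!\,\Delta_{n-1}$, and the same final factorization into the diagonal blocks $B^{(i)}$ (yielding $\sum_i\Ric_z^{v}(u_i)$ as in \eqref{dev}) and the $n\times n$ normal block $\Delta-r^2K+o(r^2)$ (yielding $\det\Delta\,(1-r^2\mathrm{Tr}(\Delta^{-1}K))$). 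The only difference is presentational — you package the elimination of the off-diagonal couplings as a Schur complement where the paper isolates the dominant permutations in the Leibniz expansion — and your explicit normal-block row is the correct one (your planned consistency check against Proposition \ref{deux} would in fact reveal that the second column of the matrix $K$ as printed in the statement carries a spurious sign).
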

	
The paper is structured as follows: we start with some geometrical preliminaries in Section \ref{sec:prelim}. Section \ref{sec:partie1} is devoted to the  case $m=n$ with the proofs of Theorem \ref{BP} and Proposition \ref{prop:hyperb} which is a corollary of Theorem \ref{BP}. The particular cases $m=1$ and $m=(n-1)$ are derived in Sections \ref{sec:partie2} and \ref{sec:partie3} respectively. The paper ends with some concluding remarks. 

\section{Geometrical preliminaries} \label{sec:prelim}

In this section, we introduce some useful notation and we survey several fundamental definitions and results from the theory of Riemannian geometry. For more details, we refer the reader to the reference books such as \cite{Do92}, \cite{Lee97} and \cite{Ber03}.
\paragraph{Exponential map.} Let $x \in M$ and $v\in T_{x} M$. There exists a unique geodesic $\gamma_v$ such that $\gamma_v(0) =x$ and $\gamma'_v(0) =v$. The exponential map of $v$ at $x$, denoted by $\exp_{x}(v)$ is defined by the identity
\begin{equation}\label{exp}
\exp_{x}(v) = \gamma_v(1)
\end{equation}
For sake of simplicity, we omit in the notation of the exponential map the dependency on the manifold $M$ which should be implicit anyway.

\paragraph{Riemann curvature tensor and curvatures.}
Let us denote by $\mathcal{R}^{(M)}$ the Riemann curvature tensor of $M$ that is for $x\in M$ and for $u,v,w \in T_xM$, 
\[ \mathcal{R}^{(M)}_x(u,v)w = \nabla_{u}\nabla_v w- \nabla_v \nabla_u w- \nabla_{[u,v]}w ,\]
where $\nabla$ denote the Levi-Civita connection. 
Let $u$,$v$ be two unit vectors of the tangent space $T_{x}M$ with $\langle u,v \rangle_{x}=0$. The sectional curvature of the plane spanned by $u$ and $v$ is defined through the identity 
\[ K_{x}^{(M)}(u,v)= \langle v, \mathcal{R}^{(M)}_x(u,v)u \rangle_x. \]

Now let $u$ be a unit vector of $T_{x} M$ and let us extend it to an orthonormal basis $\{ u_1, \dots, u_{n-1},u \}$ of $T_{x} M$. The Ricci curvature of $M$ at $x$ in direction $u$, denoted by $\Ric_{x}^{(M)}(u)$ is defined by the identity
\begin{equation}
  \label{eq:defric}
 \Ric_{x}^{(M)}(u) = \sum_{i=1}^{n-1} K_{x}^{(M)}(u,u_i).
\end{equation}
Note that $\Ric_{x}^{(M)}(u)$ does not depend on the choice of the basis.

\paragraph{Jacobi fields.} A Jacobi field along a geodesic $\gamma$ is a vector field $J$ verifying the Jacobi equation 
\begin{equation}\label{eqJacobi} J''(t) = \mathcal{R}^{(M)}_{\gamma(t)}(\gamma'(t),J(t)) \gamma' \end{equation}
where the derivative of $J$ is understood in the sense of the covariant derivative with respect to the Levi-Civita connection. In particular, along $\gamma$, there exists a unique Jacobi field with given $J(0)$ and $J'(0)$.

We recall without proof the following general result which connects the derivative of the exponential map to the Jacobi fields \cite[p. 119]{Do92}. This is a key tool of the proofs of Theorem \ref{BP}, Proposition \ref{deux} and Proposition \ref{np}. 
\begin{lem}\label{lem:exodocarmo}
Let $\gamma$ be a geodesic, $c$ a curve on $M$ such that $c(0)=\gamma(0)$ and $V$ a vector field along $c$ such that $V(0)= \gamma'(0)$. Then the function $f(t,s) = \exp_{c(s)}(t V(s))$ satisfies
\begin{equation}\label{deriv}
\frac{ \partial f}{\partial s} (t,0) = J(t)
\end{equation}
where $J$ is the unique Jacobi field along $\gamma$ with $J(0)= c'(0)$ and $J'(0)=V'(0)$.   
\end{lem}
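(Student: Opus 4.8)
The plan is to recognize $f$ as a variation of the geodesic $\gamma$ through geodesics and to invoke the classical fact that the variation field of such a family solves the Jacobi equation \eqref{eqJacobi}. First I would note that $f(t,0)=\exp_{c(0)}(tV(0))=\exp_{\gamma(0)}(t\gamma'(0))$ is exactly the geodesic $\gamma$, and introduce the two coordinate vector fields along $f$, namely $T=\partial f/\partial t$ and $S=\partial f/\partial s$, together with the covariant derivatives $\frac{D}{\partial t}$ and $\frac{D}{\partial s}$ along the $t$- and $s$-curves induced by the Levi-Civita connection $\nabla$. The definition \eqref{exp} of the exponential map shows that for each fixed $s$ the curve $t\mapsto f(t,s)=\exp_{c(s)}(tV(s))$ is a geodesic, hence $\frac{D}{\partial t}T=0$ for all $(t,s)$.

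The core of the argument is a second covariant differentiation of the variation field $S$, using two standard ingredients. The symmetry lemma, which follows from the torsion-freeness of $\nabla$ and the commuting of the coordinate fields $\partial_t,\partial_s$, gives $\frac{D}{\partial s}T=\frac{D}{\partial t}S$. The curvature commutation formula for a two-parameter map, written with the convention of $\mathcal{R}^{(M)}$ fixed in Section \ref{sec:prelim}, gives $\frac{D}{\partial t}\frac{D}{\partial s}T-\frac{D}{\partial s}\frac{D}{\partial t}T=\mathcal{R}^{(M)}(T,S)T$, the bracket term dropping out since $[\partial_t,\partial_s]=0$. Combining these with $\frac{D}{\partial t}T=0$ yields
\[ \frac{D^2}{\partial t^2}S=\frac{D}{\partial t}\frac{D}{\partial t}S=\frac{D}{\partial t}\frac{D}{\partial s}T=\mathcal{R}^{(M)}(T,S)T. \]
Restricting to $s=0$ and writing $J(t)=S(t,0)$ and $T(t,0)=\gamma'(t)$ produces exactly the Jacobi equation \eqref{eqJacobi}, so $J$ is a Jacobi field along $\gamma$.

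It then remains to identify the two initial data, which pins $J$ down uniquely by the uniqueness statement recalled in the Jacobi-fields paragraph. Since $f(0,s)=\exp_{c(s)}(0)=c(s)$, differentiating in $s$ at $s=0$ gives $J(0)=\partial_s f(0,0)=c'(0)$. For the derivative I would again use the symmetry lemma to exchange the order of differentiation, $J'(0)=\frac{D}{\partial t}S(0,0)=\frac{D}{\partial s}T(0,0)$, and observe that $T(0,s)=\partial_t f(0,s)$ is the initial velocity of the geodesic $t\mapsto\exp_{c(s)}(tV(s))$, hence equals $V(s)$; its covariant derivative along $c$ at $s=0$ is $V'(0)$. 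Thus $J(0)=c'(0)$ and $J'(0)=V'(0)$, as claimed.

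The computations are all routine once the right objects are in place; the only point demanding care is the bookkeeping of signs, namely checking that the curvature commutation identity is stated with the same convention as the tensor $\mathcal{R}^{(M)}$ and the Jacobi equation \eqref{eqJacobi} adopted in Section \ref{sec:prelim}, so that no spurious minus sign appears. I expect this sign-matching, together with a precise statement of the symmetry lemma for smooth maps from a rectangle into $M$, to be the only genuine subtlety; everything else follows by directly differentiating the explicit variation $f(t,s)=\exp_{c(s)}(tV(s))$.
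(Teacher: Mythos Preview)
Your argument is correct and is the standard proof: recognize $f$ as a variation of $\gamma$ through geodesics, use the symmetry lemma and the curvature commutation identity to show the variation field satisfies the Jacobi equation, then read off the initial data $J(0)=c'(0)$ and $J'(0)=V'(0)$. The sign check you flag indeed works out with the paper's conventions, since $\mathcal{R}^{(M)}(u,v)w=\nabla_u\nabla_v w-\nabla_v\nabla_u w-\nabla_{[u,v]}w$ together with $[\partial_t,\partial_s]=0$ gives $\frac{D}{\partial t}\frac{D}{\partial s}T-\frac{D}{\partial s}\frac{D}{\partial t}T=\mathcal{R}^{(M)}(T,S)T$, and hence $J''=\mathcal{R}^{(M)}(\gamma',J)\gamma'$, matching \eqref{eqJacobi}.

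There is nothing to compare against: the paper states this lemma \emph{without proof}, merely citing \cite[p.~119]{Do92}. What you have written is essentially the proof given there.
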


In the case of manifolds with constant sectional curvature, Jacobi fields have an exact expression \cite[p113]{Do92}. The particular case of the hyperbolic space $\mathcal{H}_k^n$, stated in the following lemma, is the main argument of the proof of Proposition \ref{hyp}. 
\begin{lem}\label{lem:JFhyper}
Let $\gamma$ be a geodesic of $\mathcal{H}_k^n$ and $V$ be a parallel vector field along $\gamma$ with norm $1$ such that $\langle \gamma'(t), V(t) \rangle_{\gamma(t)}=0$ and $\| V(t) \|=1$. Then the vector field defined by 
\begin{equation}
 J(t) = \frac{ \sinh(kt)}{k} V(t)
\end{equation}
is the unique Jacobi field of $\mathcal{H}_k^n$ along $\gamma$ which satisfies $J(0)=0$ and $J'(0)=V(0)$. 
\end{lem}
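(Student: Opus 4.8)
The statement to prove is Lemma~\ref{lem:JFhyper}: along a geodesic $\gamma$ of $\mathcal{H}_k^n$, the field $J(t)=\frac{\sinh(kt)}{k}V(t)$, with $V$ parallel, unit-norm, and orthogonal to $\gamma'$, is the unique Jacobi field satisfying $J(0)=0$ and $J'(0)=V(0)$. The plan is to verify directly that this candidate solves the Jacobi equation~\eqref{eqJacobi}, check the two initial conditions, and then invoke the uniqueness of Jacobi fields with prescribed $J(0)$ and $J'(0)$ recalled just after~\eqref{eqJacobi}. The whole argument rests on two structural facts about $\mathcal{H}_k^n$: it has constant sectional curvature $-k^2$, and $V$ being parallel means $V'(t)=0$ (covariant derivative along $\gamma$), so differentiation only acts on the scalar prefactor.

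\textbf{Key steps.} First I would compute the covariant derivatives of $J$. Since $V$ is parallel along $\gamma$, we have $V'=0$, hence $J'(t)=\cosh(kt)\,V(t)$ and $J''(t)=k\sinh(kt)\,V(t)$. Next I would evaluate the curvature term on the right-hand side of~\eqref{eqJacobi}. For a space of constant sectional curvature $\kappa$, the Riemann tensor acts by $\mathcal{R}_x(u,w)u = \kappa\bigl(\langle u,u\rangle w - \langle u,w\rangle u\bigr)$ for the relevant arguments; applying this with $u=\gamma'(t)$ (a unit vector, since $\gamma$ is a geodesic parametrized by arc length) and $w=J(t)$, and using $\langle\gamma'(t),V(t)\rangle=0$ so that $\langle\gamma'(t),J(t)\rangle=0$, gives $\mathcal{R}_{\gamma(t)}(\gamma'(t),J(t))\gamma' = \kappa\,J(t)$ with $\kappa=-k^2$ for $\mathcal{H}_k^n$. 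Comparing, the Jacobi equation requires $J''(t)=-k^2 J(t)$, i.e. $k\sinh(kt)V(t) = -k^2\cdot\frac{\sinh(kt)}{k}V(t)$. This is where I must be careful about the sign convention: with curvature $-k^2$ the two sides read $k\sinh(kt)V$ versus $k^2\cdot\frac{\sinh(kt)}{k}V = k\sinh(kt)V$, which match, confirming that the $\sinh$ (rather than $\sin$) profile is forced precisely by the negative curvature.

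\textbf{Initial conditions and uniqueness.} Evaluating at $t=0$ gives $J(0)=\frac{\sinh(0)}{k}V(0)=0$ and $J'(0)=\cosh(0)\,V(0)=V(0)$, as required. Since the Jacobi equation is a second-order linear ODE along $\gamma$, the field with prescribed $J(0)$ and $J'(0)$ is unique, so the verified candidate is the Jacobi field claimed.

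\textbf{Main obstacle.} The computation itself is routine; the only genuine care needed is the sign bookkeeping in the curvature term, and in particular reconciling the sign convention for $\mathcal{R}^{(M)}$ and $K^{(M)}$ used in the paper (where $K_x(u,v)=\langle v,\mathcal{R}_x(u,v)u\rangle$) with the constant-curvature identity for $\mathcal{R}$, so that the negative sectional curvature $-k^2$ indeed yields the hyperbolic $\sinh$ solution rather than the spherical $\sin$ one. I would therefore make the evaluation of $\mathcal{R}_{\gamma(t)}(\gamma'(t),J(t))\gamma'$ explicit in the chosen convention, confirming the match, and simply cite \cite[p.~113]{Do92} for the constant-curvature form of the Riemann tensor if a cleaner reference is preferred.
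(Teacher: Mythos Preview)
Your approach is correct and is the standard direct verification. Note, however, that the paper does not actually prove this lemma: it is stated in the preliminaries as a recalled fact, with a bare citation to \cite[p.~113]{Do92} for the explicit form of Jacobi fields in constant curvature. So there is no ``paper's own proof'' to compare against; your proposal supplies what the paper simply quotes.

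One remark on the write-up: your middle paragraph first asserts that the Jacobi equation forces $J''=-k^2J$ and displays the mismatched identity $k\sinh(kt)V=-k\sinh(kt)V$, and only afterwards corrects course by invoking the sign convention. In a clean version you should resolve the convention \emph{before} writing the equation, not after. Concretely, with the paper's definition $K(u,v)=\langle v,\mathcal{R}(u,v)u\rangle$ and Jacobi equation $J''=\mathcal{R}(\gamma',J)\gamma'$, the normal Jacobi equation in constant curvature $\kappa$ reads $J''=-\kappa J$ (check this via the curvature symmetries, or just cite \cite[p.~113]{Do92} as you suggest); with $\kappa=-k^2$ this is $J''=k^2J$, which is exactly what $\frac{\sinh(kt)}{k}V$ satisfies. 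State it that way once and the argument is two lines.
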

 
\paragraph{Parallel transport.} Let $V$ be a vector field along a curve $\gamma$. $V$ is called a parallel vector field if $V'(t) =0$ for all $t$, in the sense of the covariant derivative. Now, for any $u \in T_{\gamma(0)}$, there exists a unique parallel vector field $V$ along $\gamma$ such that $V(0)=u$, called the parallel transport of $u$ along $\gamma$. In this paper, $V(t)$ will be denoted by $u(t)$. 
 Note that the parallel transport is a linear isomorphism from $T_{\gamma(0)}M$ to $T_{\gamma(t)}M$ which preserves the scalar product. 

	\section{Proof of Theorem \ref{BP} and Proposition \ref{prop:hyperb}}\label{sec:partie1}
\subsection{Proof of \eqref{Jac}}
\label{subsec:Jacobien}

Let us fix $(z,u_0,\dots,u_n)\in {\mathbb T}_nM$.
We endow the tangent space $T_z M$ with an orthonormal basis $\{ e_1,\dots, e_n \} $ and write 
\[ u_i=   \sum_{j=1}^{n} u_i^j e_j .\] 
In order to write the Jacobian matrix of $\Phi_n$, we need to introduce well-adapted bases of the tangent spaces $T_{x_i}M$ for $i=0, \dots, n$. To this end, for each $i$ we consider an orthonormal basis of $T_zM$, $\mathcal{V}^{(i)}=\{v_1^{(i)},\dots,v_{n}^{(i)}\}$ where $v_1^{(i)} = u_i$. We consider the basis of $T_{x_i} M$, $\mathcal{V}^{(i)}(r)=\{v_1^{(i)}(r),\dots,v_{n}^{(i)}(r)\}$, obtained by parallel transport of $\mathcal{V}^{(i)}$ along $\gamma_i(t) =\exp_z(tu_i)$.\\
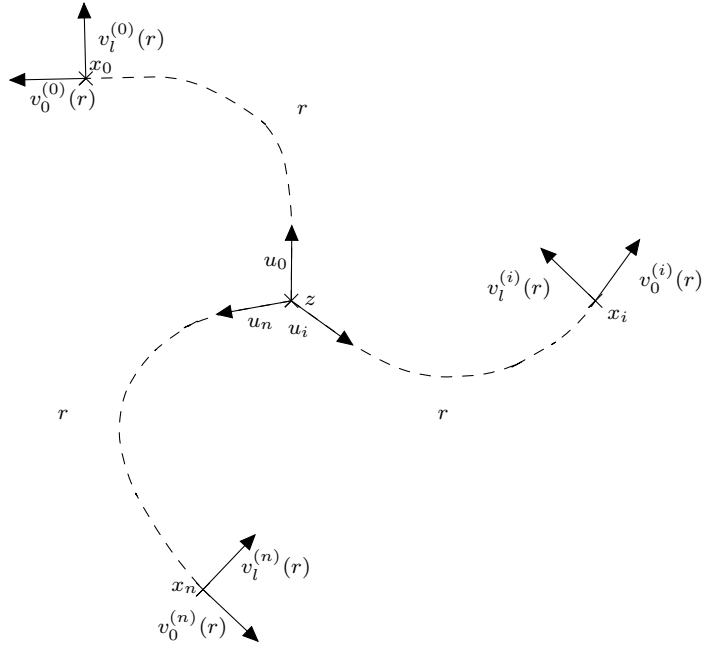
\begin{figure}[htbp]
	\centering

\begin{tikzpicture}[line cap=round,line join=round,>=triangle 45,x=1.0cm,y=1.0cm]
\clip(-6.,-5.) rectangle (6.,5.);

\draw [dash pattern=on 4pt off 4pt, rounded corners=10pt] (0.,0.)-- (-1.0953386919294494,-0.19696469112468673)-- (-1.533645371203449,-0.4210872360077298)-- (-2.0396453712034486,-0.8830872360077288)-- (-2.303645371203449,-1.5210872360077274)-- (-2.1936453712034485,-2.269087236007726)-- (-1.8856453712034489,-2.8630872360077246)-- (-1.423645371203449,-3.5450872360077232)-- (-1.1606681674018984,-3.827904048585847);
\draw [dash pattern=on 4pt off 4pt,rounded corners=10pt] (0.,0.)-- (1.0132746666185437,-0.7287831045637811)-- (1.7410261797457272,-1.0273478278980097)-- (2.7343546287965497,-0.9710872360077286)-- (3.174354628796549,-0.7290872360077292)-- (3.724354628796549,-0.39908723600772983)-- (4.,0.);
\draw [dash pattern=on 4pt off 4pt,rounded corners=10pt] (0.,0.)-- (0.008993075606364791,1.2774480272243458)-- (-0.14438896663782597,2.172176606982122)-- (-0.6045350933703982,2.5684135494462796)-- (-1.4225726520060824,2.9774323287641202)-- (-2.7023199040936117,2.949146848825859);
\draw [->] (0.,0.) -- (0.0071423691989552125,1.0145589609472216);
\draw [->] (0.,0.) -- (0.8215599759867002,-0.595331928030752);
\draw [->] (0.,0.) -- (-0.9982902145055506,-0.1811009284861391);
\draw [->] (-2.7023199040936117,2.949146848825859) -- (-3.7166564237771147,2.926734273952849);
\draw [->] (-2.7023199040936117,2.949146848825859) -- (-2.7263629691942977,3.9634460303776557);
\draw [->] (-1.1606681674018984,-3.827904048585847) -- (-0.4246453651114366,-4.526224414309614);
\draw [->] (-1.1606681674018984,-3.827904048585847) -- (-0.4641113035333426,-3.090212084366579);
\draw [->] (4.,0.) -- (4.59079504480661,0.8248285359443109);
\draw [->] (4.,0.) -- (3.2725750889092238,0.7072721522869793);

\begin{scriptsize}
\draw [color=black] (0.,0.)-- ++(-2.5pt,-2.5pt) -- ++(5.0pt,5.0pt) ++(-5.0pt,0) -- ++(5.0pt,-5.0pt);
\draw [color=black] (-1.1606681674018984,-3.827904048585847)-- ++(-2.5pt,-2.5pt) -- ++(5.0pt,5.0pt) ++(-5.0pt,0) -- ++(5.0pt,-5.0pt);
\draw [color=black] (4.,0.)-- ++(-2.5pt,-2.5pt) -- ++(5.0pt,5.0pt) ++(-5.0pt,0) -- ++(5.0pt,-5.0pt);
\draw [color=black] (-2.7023199040936117,2.949146848825859)-- ++(-2.5pt,-2.5pt) -- ++(5.0pt,5.0pt) ++(-5.0pt,0) -- ++(5.0pt,-5.0pt);
\end{scriptsize}
\node (a) at (0.25,0) { \footnotesize $z$};
\node (b) at (-1.4,-3.8) { \footnotesize $x_n$};
\node (c) at (-2.5,3.1) {\footnotesize $x_0$};
\node (d) at (4.3,-0.2) {\footnotesize $x_i$};
\node (e) at (0.15,2.54) {\footnotesize $r$};
\node (f) at (2,-1.5) {\footnotesize $r$};
\node (g) at (-3,-1.5) {\footnotesize $r$};
\node (h) at (-0.2,0.5) {\footnotesize $u_0$};
\node (i) at (-0.4,-0.3) {\footnotesize $u_n$};
\node (j) at (0.1,-0.4) {\footnotesize $u_i$};
\node (k) at (-2.1,3.5) {\footnotesize $v_{l}^{(0)}(r)$};
\node (l) at (-3, 2.7) {\footnotesize $v_{0}^{(0)}(r)$};
\node (m) at (-0.2,-3.5) {\footnotesize $v_{l}^{(n)}(r)$};
\node (n) at (-1.3,-4.3) {\footnotesize $v_{0}^{(n)}(r)$};
\node (o) at (3,0.2) {\footnotesize $v_{l}^{(i)}(r)$};
\node (p) at (5,0.3) {\footnotesize $v_{0}^{(i)}(r)$};

\end{tikzpicture}

		\caption{The function $\Phi_n$ and the bases $\mathcal{V}^{(i)}(r)$ }
\end{figure} 

\noindent {\it Step 1: derivatives with respect to $r$}.
Since $\frac{\partial x_i}{\partial r} = P_{z \to x_i}(u_i) = v_1^{(i)}(r)$, we notice that the column vector of the coordinates of $\frac{\partial x_i}{\partial r}$  in the basis $\mathcal{V}^{(i)}(r)$ is 
\begin{equation}
  \label{eq:teteEi}
\frac{\partial x_i}{\partial r}=\left(\begin{array}{c}1\\0\\\vdots\\0\end{array}\right)=\left(\begin{array}{c}1\\
\hline\\
{{\mathbf 0}_{n-1}}^T
\\~\\
\end{array}
\right) 
\end{equation}
where ${\mathbf 0}_{n-1}=(0,\dots,0)$. In the rest of the paper, we identify for sake of simplicity a vector (resp. a linear transformation) with its column vector in a  natural basis (resp its matrix with well-chosen natural bases). \\~\\
\noindent {\it Step 2: derivatives with respect to $z$}.
Let us consider the  submatrix with size $n\times n$ 
$\frac{\partial x_i}{\partial z}$. First, we notice that the entry
$\left(\frac{\partial x_i}{\partial z}\right)_{l,m}$
is the projection onto $v_l^{(i)}(r)$ of the derivative of $x_i$ with respect to $z$ in the direction $e_m$. Secondly, applying Lemma \ref{lem:exodocarmo} to $\gamma(t)=\gamma_i(t)=\exp_z(t u_i)$, $c(s)=\gamma(s)=\exp_z(se_m)$ and $V$ as the parallel transport of $u_i$ along $c$, we obtain that the derivative of $x_i$ with respect to $z$ in the direction $e_m$ is $J_m^{(i)}(r)$ where $J_m^{(i)}$ is the unique Jacobi field  along $\gamma_i$ such that $J_m^{(i)}(0)=c'(0)=e_m$ and ${J_m^{(i)\prime}}(0)=V'(0)=0$. We deduce from these two observations that  
\[
\left(\frac{\partial x_i}{\partial z}\right)_{l,m}
= \left( \frac{\partial x_i}{\partial z} \right)_{l,m} = \langle v_l^{(i)}(r), J_m^{(i)}(r) \rangle_{x_i}.\]
Now thanks to \cite[Chapter 5, Proposition 3.6]{Do92},  we can do the following calculation:
\begin{equation}
  \label{eq:Ai1ereligne}
 \langle v_1^{(i)}(r), J_m^{(i)}(r)\rangle_{x_i}= \langle u_i,{J_m^{(i)\prime}}(0) \rangle_{z} r +\langle u_i,J_m^{(i)}(0)\rangle_{z} = \langle u_i,e_m\rangle_{z}=u_i^{m}. 
\end{equation}
In particular, \eqref{eq:Ai1ereligne} shows that the first line of the submatrix $A^{(i)}$ is equal to $u_i^T$. In other words, we can rewrite $A^{(i)}$ as 
\begin{equation}
  \label{eq:teteAi}
\frac{\partial x_i}{\partial z}
= \left(\begin{array}{lll}& u_i^{T}& \\\hline\\ & \mbox{ n/a } & \\ & & \end{array}\right)
\end{equation}
where `n/a' only means that the submatrix of $A^{(i)}$ under its first line does not need to be explicit in the rest of the proof.\\~\\
\noindent {\it Step 3: derivatives with respect to $u_i$}.
The submatrix $
\frac{\partial x_i}{\partial u_i}$ has size $n\times (n-1)$. Again the entry 
$\left(\frac{\partial x_i}{\partial u_i}\right)_{l,(m-1)}$
for $1\le l\le n$ and $2\le m\le n$, 
is the projection onto $v_l^{(i)}(r)$ of the derivative of $x_i$ with respect to $u_i$ in the direction $v_{m}^{(i)}$. Moreover, applying Lemma \ref{lem:exodocarmo} to $\gamma=\gamma_i$, $c(s)=z$ and $V(s)=v_1^{(i)}+sv_m^{(i)}$, we get that the derivative of $x_i$ with respect to $u_i$ in the direction $v_{m}^{(i)}$ is $\tilde{J}_m^{(i)}(r)$ where $\tilde{J}_m^{(i)}$ is the unique Jacobi field along $\gamma_i$ such that $\tilde{J}_{m}^{(i)}(0)=c'(0)=0$ and ${\tilde{J}_m^{(i)\prime}}(0)=V'(0)=v_m^{(i)}$. Consequently, this implies that 
\[
\left(\frac{\partial x_i}{\partial u_i}\right)_{l,(m-1)}
= \left( \frac{\partial x_i}{\partial u_i} \right)_{l,m} = \langle v_l^{(i)}(r), \tilde{J}_m^{(i)}(r)\rangle_{x_i}. \]
An application of \cite[Chapter 5, Proposition 3.6]{Do92} shows that $\langle v_1^{(i)}(r), \tilde{J}_m^{(i)}(r)\rangle =0$ which means that $\tilde{J}_m^{(i)}$ is a normal Jacobi field. In particular, this implies that the first line of $B^{(i)}$ is identically equal to zero. In the sequel, we denote by $B^{(i)}$ the $(n-1)\times (n-1)$-matrix under the first line of 
$\frac{\partial x_i}{\partial u_i}$, i.e. such that
\begin{equation}
  \label{eq:teteBi}
\frac{\partial x_i}{\partial u_i}
=\left(\begin{array}{lll} & {\mathbf 0}_{n-1} & \\\hline\\ & B^{(i)} & \\ & & \end{array}\right).
\end{equation}
In particular, we get for every $2\le l,m\le n$,
\begin{equation}
  \label{eq:coeffMi}
B_{l-1,m-1}^{(i)}=\langle v_{l}^{(i)}(r), \tilde{J}_{m}^{(i)}(r)\rangle_{x_i}
\end{equation}

\noindent {\it Step 4: rewriting of the Jacobian determinant.}
The Jacobian determinant $\J_{\Phi_n}$ of $\Phi_n$ can be written as 
$$\J_{\Phi_n}(z,r,u_0,\dots,u_n) =\det \left(\begin{array}{cccccc}
\cline{3-3} 
\frac{\partial x_0}{\partial r}&\frac{\partial x_0}{\partial z}&\multicolumn{1}{|c|}{\frac{\partial x_0}{\partial u_0}}&~&\multirow{2}{0cm}{\text{\bf 0}}&~\\
\cline{3-3}
 & & & & &\\
\cline{4-4}
\frac{\partial x_1}{\partial r}&\frac{\partial x_1}{\partial z}&~&\multicolumn{1}{|c|}{\frac{\partial x_1}{\partial u_1}}&~~&~\\
\cline{4-4}
 & & & & &\\
\vdots&\vdots&~&\multirow{2}{0cm}{\text{\bf 0}}~~&\ddots&~\\
 & & & & &\\
\cline{6-6}
\frac{\partial x_n}{\partial r}&\frac{\partial x_n}{\partial z}&~&~&~&\multicolumn{1}{|c|}{\frac{\partial x_n}{\partial u_n}}\\
\cline{6-6}
\end{array}\right)$$
Combining this equality with \eqref{eq:teteEi}, \eqref{eq:teteAi} and \eqref{eq:teteBi}, we obtain that
\begin{equation}\label{jac2} \J_{\Phi_n}(z,r,u_0,\dots,u_n) =\det \left(\begin{array}{c|ccc|ccc|ccc|ccc} 
1& &u_0^T &  &&{\mathbf 0}&& &{\mathbf 0}& & &{\mathbf 0}&\\\hline&&&&&&&&&&&&\\
{\mathbf 0}&&\mbox{n/a}&&& B^{(0)}&& &{\mathbf 0}& &&{\mathbf 0}&\\&&&&&&&&&&&&\\\hline &&&&&&&&&&&&\\
\vdots&&\vdots&& &{\mathbf 0}&& 
&\ddots& &&{\mathbf 0}& \\ &&&&&&&&&&&&\\\hline
1&&u_n^T&&&{\mathbf 0}&& & {\mathbf 0}& &&{\mathbf 0}&\\\hline&&&&&&&&&&&&\\
{\mathbf 0}&&\mbox{n/a}&&&{\mathbf 0}& & &{\mathbf 0}& &&B^{(n)}&\\&&&&&&&&&&&&
\end{array}\right)\end{equation}
where for sake of simplicity, we have written ${\mathbf 0}$ for any zero matrix independently of its size. 

We can then apply a permutation of the lines so that the lines $(1|u_i^T|{\mathbf 0})$ appear in the first $(n+1)$ lines of a new matrix which is a block lower triangular matrix and which has the same determinant as the Jacobian determinant up to a possible minus sign:
\begin{equation}\label{jac3} |J_{\Phi_n}(z,r,u_0,\dots,u_n)| =|\det 
\left(\begin{array}{ccccc}
\cline{1-2}
\multicolumn{1}{|c}{1}&\multicolumn{1}{c|}{u_0^T}&~~&~~&~~\\
\multicolumn{1}{|c}{\vdots}&\multicolumn{1}{c|}{\vdots}&~~&{\bf 0}&~~\\
\multicolumn{1}{|c}{1}& \multicolumn{1}{c|}{u_n^{T}}& ~~&~~&~~\\
\cline{1-3}
0&~~&\multicolumn{1}{|c|}{B^{(0)}}&~~&~~\\
\cline{3-3}
\vdots&~~&~~&\ddots&~~\\
\cline{5-5}
0&~~&~~&~~&\multicolumn{1}{|c|}{B^{(n)}}\\
\cline{5-5}
\end{array}\right)|
\end{equation}

Using the fact that $n!\Delta_n(u_0,\cdots,u_n)=\det\left(\begin{array}{c|ccc}1 & & u_0^T &\\\vdots & & \vdots &\\ 1 & & u_n^T &\end{array}\right)$, we obtain \eqref{Jac}. 
\begin{flushright}
$\Box$
\end{flushright}
\subsection{Proof of \eqref{dev}}
We now derive from \eqref{Jac} the expansion \eqref{dev} for small values of $r$. This only requires to expand $\det B^{(i)}$. To do so, we expand the coefficients of $B^{(i)}$ given by \eqref{eq:coeffMi} in  Lemma \ref{diag} below.
\begin{lem}\label{diag}
The coefficients of $B^{(i)}$ satisfy, for $l,m =2, \dots, n$, 
\begin{align}
 B^{(i)}_{(m-1),(m-1)}= \langle v_m^{(i)}(r), \tilde{J}_m^{(i)}(r)\rangle_{x_i}&= r- \frac{K(u_i,v_m^{(i)})}6 r^3  + o(r^3) \label{eq:termesdiag}\\
 B^{(i)}_{(l-1),(m-1)} = \langle v_l^{(i)}(r), \tilde{J}_m^{(i)}(r) \rangle_{x_i}& = o(r^2), ~~l\neq m.\label{eq:termesnondiag}
\end{align}
where $K(u_i,v_m^{(i)})$ is the sectional curvature at $z$ of the vectors $u_i$ and $v_m^{(i)}$.
\end{lem}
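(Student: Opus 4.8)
The plan is to expand the Jacobi field $\tilde J_m^{(i)}$ in the parallel orthonormal frame $\mathcal V^{(i)}(t)=\{v_1^{(i)}(t),\dots,v_n^{(i)}(t)\}$ along $\gamma_i$ and to Taylor-expand its coordinates to third order in $t$, then set $t=r$. Writing $\tilde J_m^{(i)}(t)=\sum_{k=1}^n a_k(t)\,v_k^{(i)}(t)$, the entry we seek is exactly $B^{(i)}_{(l-1),(m-1)}=\langle v_l^{(i)}(r),\tilde J_m^{(i)}(r)\rangle_{x_i}=a_l(r)$, since the frame is orthonormal. As each $v_k^{(i)}(t)$ is parallel, covariant differentiation of $\tilde J_m^{(i)}$ hits only the coordinate functions, so $\tilde J_m^{(i)\prime\prime}(t)=\sum_k a_k''(t)\,v_k^{(i)}(t)$ and the Jacobi equation \eqref{eqJacobi} becomes the linear system
\[ a_j''(t)=\sum_{k}R_{jk}(t)\,a_k(t),\qquad R_{jk}(t)=\langle v_j^{(i)}(t),\mathcal R^{(M)}_{\gamma_i(t)}(\gamma_i'(t),v_k^{(i)}(t))\gamma_i'(t)\rangle, \]
with initial data $a_j(0)=0$ and $a_j'(0)=\delta_{jm}$ read off from $\tilde J_m^{(i)}(0)=0$ and $\tilde J_m^{(i)\prime}(0)=v_m^{(i)}$.

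I would then extract the low-order Taylor coefficients directly from this system. The initial data give $a_j(0)=0$, $a_j'(0)=\delta_{jm}$; evaluating the ODE at $t=0$ and using $a_k(0)=0$ gives $a_j''(0)=0$; differentiating the ODE once, the $R_{jk}'(0)a_k(0)$ terms again vanish, leaving $a_j'''(0)=\sum_k R_{jk}(0)\,a_k'(0)=R_{jm}(0)$. Hence $a_j(t)=\delta_{jm}\,t+\tfrac16 R_{jm}(0)\,t^3+o(t^3)$ with $R_{jm}(0)=\langle v_j^{(i)},\mathcal R^{(M)}_z(u_i,v_m^{(i)})u_i\rangle$. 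For $l=m$ this is the diagonal entry $a_m(r)=r+\tfrac16\langle v_m^{(i)},\mathcal R^{(M)}_z(u_i,v_m^{(i)})u_i\rangle\,r^3+o(r^3)$, and for $l\neq m$ it is $a_l(r)=\tfrac16 R_{lm}(0)\,r^3+o(r^3)$, which is in particular $o(r^2)$, giving \eqref{eq:termesnondiag}.

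The step demanding the most care is the sign in \eqref{eq:termesdiag}: I must check that $\langle v_m^{(i)},\mathcal R^{(M)}_z(u_i,v_m^{(i)})u_i\rangle$ reduces, with the sectional-curvature sign convention used here, to $-K(u_i,v_m^{(i)})$, so that $a_m(r)=r-\tfrac16 K(u_i,v_m^{(i)})\,r^3+o(r^3)$. The decisive check is the constant-curvature case, where the normal coordinates decouple as $a_m''=-\kappa\,a_m$ ($\kappa$ the common sectional curvature) and solve exactly as $a_m(r)=\sin(\sqrt\kappa\,r)/\sqrt\kappa$ on the sphere and $\sinh(\sqrt\kappa\,r)/\sqrt\kappa$ on $\mathcal H_k^n$; their expansions $r-\tfrac\kappa6 r^3+o(r^3)$ fix the sign and guarantee compatibility with \eqref{sph} and \eqref{hyp}. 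Finally, smoothness of the metric makes the remainders $O(t^4)$, so the announced $o(r^3)$ and $o(r^2)$ hold with room to spare; these precise orders are what later make the off-diagonal entries negligible in $\det B^{(i)}$, since they enter only in products of at least two such entries, hence at order $r^{n+3}$, past the $r^{n+1}$ correction kept in \eqref{dev}.
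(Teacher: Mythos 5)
Your proof is correct and follows essentially the same route as the paper's: you write the entries as the coordinates $a_l$ of $\tilde J_m^{(i)}$ in the parallel frame and extract the Taylor coefficients up to order three from the Jacobi equation, which is exactly the paper's computation of $f_{l,m}^{(i)}=\langle v_l^{(i)}(\cdot),\tilde J_m^{(i)}(\cdot)\rangle$ and its first three derivatives at $0$. Your insistence on double-checking the sign of the curvature term against the explicit constant-curvature Jacobi fields is well placed (the displayed conventions for $\mathcal{R}$ and $K$ make this genuinely delicate) and settles the coefficient $-K(u_i,v_m^{(i)})/6$ correctly.
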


\begin{proof}[Proof of Lemma \ref{diag}]

Let us define the function $f_{l,m}^{(i)}(r) = \langle v_l^{(i)}(r), \tilde{J}_m^{(i)}(r) \rangle_{x_i}$ for fixed $i,l$ and $m$. We only need to write the third order Taylor expansion of $f_{l,m}^{(i)}$ in the neighborhood of $0$.

Since $\tilde{J}_{m}^{(i)}(0)=0$,  we have 
\begin{equation}
  \label{eq:terme0}
f_{l,m}^{(i)}(0)=  \langle v_l^{(i)}, \tilde{J}_m^{(i)}(0) \rangle =0.
\end{equation}
Because $v_l^{(i)}(r)$ is a parallel transport, ${v_l^{(i)\prime}}(r)=0$ and the derivative ${f_{l,m}^{(i)\prime}}$ satisfies
$$ {f_{l,m}^{(i)\prime}}(r) =  \langle v_l^{(i)}(r), {\tilde{J}_m^{(i)\prime}}(r) \rangle_{x_i}.$$ Consequently, using $\tilde{J}_m^{(i)\prime}(0)=v_m^{(i)}$, we get
\begin{equation}
  \label{eq:terme1}
{f_{l,m}^{(i)\prime}}(0) = \langle v_l^{(i)}, v_m^{(i)} \rangle_{z} = \delta_{l,m}.
\end{equation}
The Jacobi field $\tilde{J}_m^{(i)}(r)$ along the curve $\gamma_i$ satisfies the Jacobi equation, see e.g. \cite[Chapter 5, \S 2]{Do92}:
$$ {\tilde{J}_m^{(i)\prime\prime}}(r)+\mathcal{R}(\tilde{J}_m^{(i)}(r),\gamma_i'(r))\gamma_i'(r)=0$$
where $\mathcal{R}$ is the Riemann curvature tensor. Combining this with the facts that $v_l^{(i)}(r)$ is a parallel transport and that $\gamma_i'(r)=v_1^{(i)}(r)$, we obtain
\[ {f_{l,m}^{(i)\prime\prime}}(r) =\langle v_l^{(i)}(r), {\tilde{J}_m^{(i)\prime\prime}}(r) \rangle_{x_i} =- \langle v_l^{(i)}(r), \mathcal{R}^{(M)}_{x_i}(\tilde{J}_m^{(i)}(r), v_1^{(i)}(r))v_1^{(i)}(r) \rangle_{x_i}\]
and in particular 
\begin{equation}
  \label{eq:terme2}
{f_{l,m}^{(i)\prime\prime}}(0) = 0.
\end{equation}
Finally, using again the fact that $v_l^{(i)}$ is a parallel transport, we show that the third derivative of ${f_{l,m}^{(i)}}$ is
\[ {f_{l,m}^{(i)\prime\prime\prime}}(r) = \langle v_l^{(i)}(r), {\tilde{J}_m^{(i)\prime\prime\prime}}(r) \rangle_{x_i}.\]
Thanks to \cite[p.115]{Do92} and the equality $\gamma_i'(r)=v_1^{(i)}(r)$, we notice that 
\begin{equation}\label{eq:derive3} {\tilde{J}_m^{(i)\prime\prime\prime}}(r)= -\mathcal{R}_{x_i}^{(M)}({\tilde{J}_m^{(i)\prime}}(r), v_1^{(i)}(r))v_1^{(i)}(r).\end{equation}
Consequently, using that ${\tilde{J}_m^{(i)\prime}}(0)=v_m^{(i)}$, we deduce from the two previous equalities that when $l=m$, 
\begin{equation}
  \label{eq:terme3}
{f_{m,m}^{(i)\prime\prime\prime}}(0) = -\langle v_m^{(i)}, \mathcal{R}^{(M)}_{z}(v_m^{(i)}, v_1^{(i)})v_1^{(i)}\rangle_{x_i}= -K_z^{(M)}(u_i,v_m^{(i)}).
\end{equation}
where the last equality comes from the definition of the sectional curvature, see e.g. \cite[Proposition 3.1]{Do92}.
The estimates \eqref{eq:termesdiag} and \eqref{eq:termesnondiag} follow now by \eqref{eq:terme0}, \eqref{eq:terme1}, \eqref{eq:terme2} and \eqref{eq:terme3} combined with Taylor's theorem applied to $f_{l.m}^{(i)}$ in the neighborhood of $0$.
\end{proof}

Going back to the proof of \eqref{dev}, let us show that the determinant of $B^{(i)}$ has the required expansion 
\begin{equation}\label{detM}
  \det B^{(i)}= r^{n-1} -\frac{\Ric_z^{(M)}(u_i)}6 r^{n+1} + o(r^{n+1}). 
\end{equation}
Indeed, we can rewrite the determinant as
\begin{align*} \det B^{(i)} &= \sum_{\sigma \in \mathfrak{S}_{n-1}} \sgn(\sigma) \prod_{m=1}^{n-1} B^{(i)}_{\sigma(m),m}\\
                &=  \prod_{m=2}^{n}  \langle v_m^{(i)}(r), \tilde{J}_m^{(i)}(r) \rangle_{x_i} +  \sum_{\sigma \in \mathfrak{S}_{n-1} \backslash \{ \mathrm{Id} \}} \sgn(\sigma) \prod_{m=2}^{n}  \langle v_{\sigma(m)}^{(i)}(r), \tilde{J}_m^{(i)}(r) \rangle_{x_i},
								\end{align*}
where $\mathfrak{S}_{n-1}$ denotes the set of permutations of $\{1,\cdots,(n-1)\}$ and $\sgn(\sigma)$ is the signature of the permutation $\sigma$. For sake of simplicity, we use the same notation $\sigma$ for a permutation of either the set $\{1,\cdots,(n-1)\}$ or the set $\{2,\cdots,n\}$.

It then follows from \eqref{eq:termesdiag} and \eqref{eq:defric} that
\begin{equation}\label{det1} \prod_{m=2}^{n}  \langle v_m^{(i)}(r), \tilde{J}_m^{(i)}(r) \rangle_{x_i} =  r^{n-1} -\frac{\sum_{m=2}^{n} K_z^{(M)}(u_i,v_m^{(i)})}6 r^{n+1} + o(r^{n+1})= r^{n-1} -\frac{\Ric_z^{(M)}(u_i)}6 r^{n+1} + o(r^{n+1}). \end{equation}
It remains to show that for all $\sigma \neq \mathrm{Id}$, 
\begin{equation}
  \label{eq:permnonid}
  \lim_{r\to 0} \frac{1}{r^{n+1}}\prod_{m=2}^{n}  \langle v_{\sigma(m)}^{(i)}(r), \tilde{J}_m^{(i)}(r) \rangle_{x_i} =0. 
\end{equation}
Since $\sigma \neq \mathrm{Id}$, at least two indices, say $2$ and $3$ satisfy $2\neq \sigma(2)$ and $3\neq \sigma(3)$. Let us rewrite the product as
\begin{align}\label{eq:produitdecomp}
 \frac{1}{r^{n+1}}\prod_{m=2}^{n}  \langle v_{\sigma(m)}^{(i)}(r), \tilde{J}_j^{(i)}(r) \rangle_{x_i} &= \frac{1}{r^{n+1}}  \langle v_{\sigma(2)}^{(i)}(r), \tilde{J}_2^{(i)}(r) \rangle_{x_i}  \langle v_{\sigma(3)}^{(i)}(r), \tilde{J}_3^{(i)}(r) \rangle_{x_i} 
\prod_{j=4}^{n}  \langle v_{\sigma(m)}^{(i)}(r), \tilde{J}_m^{(i)}(r) \rangle_{x_i}.
\end{align}
Now, thanks to \eqref{eq:termesnondiag}, the first two terms satisfy 
\begin{equation}
  \label{eq:deuxpremierstermes}
 \lim_{r\to 0} \frac{1}{r^2} \langle v_{\sigma(2)}^{(i)}(r), \tilde{J}_2^{(i)}(r) \rangle_{x_i}   =\lim_{r\to 0} \frac{1}{r^2} \langle v_{\sigma(3)}^{(i)}(r), \tilde{J}_3^{(i)}(r) \rangle_{x_i}  =0.
\end{equation}
Looking at both \eqref{eq:termesdiag} and \eqref{eq:termesnondiag}, we observe that the remaining terms behave like $r^{n-3}$ at most, i.e. there is a positive constant $C$ such that for $r$ small enough,
\begin{equation}
  \label{eq:autrestermes}
\prod_{j=4}^{n}  \langle v_{\sigma(m)}^{(i)}(r), \tilde{J}_m^{(i)}(r) \rangle_{x_i} \le Cr^{n-3}.
\end{equation}
Thus inserting \eqref{eq:deuxpremierstermes} and \eqref{eq:autrestermes} into \eqref{eq:produitdecomp}, we obtain \eqref{eq:permnonid} which, combined to \eqref{det1}, implies in turn \eqref{detM}. The expansion \eqref{dev} is now a direct consequence of both \eqref{Jac} and \eqref{detM}. 
\begin{flushright}
$\Box$
\end{flushright}
\subsection{Proof of \eqref{hyp}}
Recall that $\J_{\Phi_n}(z,r,u_0,u_1,\dots,u_n)$ denote the Jacobian determinant of the function $\Phi_n$. To prove \eqref{hyp} it suffices to prove that, when $M=\mathcal{H}_k^n$,  
\[ |\J_{\Phi_n}(z,r,u_0,u_1,\dots,u_n)| = n! \Delta_n(u_0,\dots,u_n) \left( \frac{ \sinh(kr)}{k}\right)^{n^2-1} .\] 
We proved in Section \ref{subsec:Jacobien} that in a general manifold, 
\begin{equation}\label{JacobienGene} |\J_{\Phi_n}(z,r,u_0,u_1,\dots,u_n)| = n! \Delta_n(u_0,\dots,u_n) \prod_{i=0}^{n}| \det B^{(i)}| \end{equation}
where $ B^{(i)}$ are $(n-1)\times (n-1)$ matrices with coefficients
\begin{equation}\label{eq:teteBiBIS}
 B^{(i)}_{l-1,m-1}= \langle v^{(i)}_l (r), \tilde{J}^{(i)}_m(r) \rangle_{x_i} 
\end{equation}
and $\tilde{J}^{(i)}_m(r)$ is the unique Jacobi field with $\tilde{J}^{(i)}_m(0)=0$ and $\tilde{J}^{(i)\prime}_m(0)= v^{(i)}_m$. Now Lemma \ref{lem:JFhyper} provide exact expression of these Jacobi fields. Indeed, 
 applying Lemma \ref{lem:JFhyper} to $\gamma=\gamma_i$ and $V(t)= v^{(i)}_m(t)$, we obtain 
\begin{equation}\label{eq:JFhyperbolique}
\tilde{J}^{(i)}_m(r) = \frac{ \sinh(kr)}{k} v^{(i)}_m(r).
\end{equation}
Inserting \eqref{eq:JFhyperbolique} into \eqref{eq:teteBiBIS}, it follows that
\begin{equation}\label{eq:newBi}
B^{(i)}_{l-1,m-1} =  \langle v^{(i)}_l (r), \frac{ \sinh(kr)}{k} v^{(i)}_m(r) \rangle_{x_i} = \frac{ \sinh(kr)}{k} \delta_{l,m}
\end{equation}
so that 
\begin{equation}\label{eq:BiId}
B^{(i)}=  \frac{ \sinh(kr)}{k} \mathrm{Id}_{(n-1)}
\end{equation}
where $ \mathrm{Id}_{(n-1)}$ denotes the identity matrix of size $(n-1)$. 
Combining \eqref{eq:BiId} and \eqref{JacobienGene}, we obtain 

\[ |\J_{\Phi_n}^{\mathcal{H}_k^n}(z,r,u_0,u_1,\dots,u_n)| = n! \Delta_n(u_0,\dots,u_n) \left( \frac{ \sinh(kr)}{k}\right)^{n^2-1} \]
which establishes the equality of measures \eqref{hyp}.
\begin{flushright}
$\Box$
\end{flushright}

\section{Proof of Proposition \ref{deux}}\label{sec:partie2}
Let us calculate the Jacobian determinant of the function $\Phi_1$ given in Proposition \ref{deux}. To do so, 
let us consider an orthonormal basis of $T_zM$, $\mathcal{V} =\{ v_1, \dots,v_{n} \}$ where $v_1=u$. We consider the bases $\mathcal{V}^{(i)}(r)=\{  v^{(i)}_1(r), \dots,v^{(i)}_{n}(r) \}$ obtained by parallel transport of the basis $\mathcal{V}$ along $\gamma_i(t)= \exp_z((-1)^itu)$.\\ 

\noindent {\it Step 1: derivatives with respect to $r$}.
As in the proof of \eqref{Jac},
the column vector 
of the coordinates of $\frac{\partial x_i}{\partial r}$  in the basis $\mathcal{V}^{(i)}(r)$ is 
\begin{equation}
  \label{eq:teteEi2}
\frac{\partial x_i}{\partial r}=\left(\begin{array}{c}(-1)^i\\
\hline\\
{{\mathbf 0}_{n-1}}^T
\\~\\
\end{array}
\right). 
\end{equation}
~\\
\noindent {\it Step 2: derivatives with respect to $z$}.
Again, we show that
the  submatrix with size $n\times n$,
$
\frac{\partial x_i}{\partial z}$ satisfies
\[
\left(\frac{\partial x_i}{\partial z}\right)_{l,m}
= \langle v_l^{(i)}(r), J_m^{(i)}(r) \rangle_{x_i}.\]
where $J_m^{(i)}$ is the unique Jacobi field  along $\gamma_i$ such that $J_m^{(i)}(0)=v_m$ and ${J_m^{(i)\prime}}(0)=0$. 
Using \cite[Chapter 5, Proposition 3.6]{Do92}, we obtain
\[  \langle   v_0^{(i)}(r), J_m^{(i)}(r)\rangle _{x_i}= \langle  u,{J_m^{(i)}}'(0)\rangle _{x_i} r +\langle  u,J_m^{(i)}(0)\rangle _{x_i} = \langle  u,v_m\rangle _{x_i}= \delta_{0,m},\]
which shows that the shape of 
$\frac{\partial x_i}{\partial z}$
is
\begin{equation}
   \label{eq:teteAi2}
\frac{\partial x_i}{\partial z}
= \left(\begin{array}{l|ll}1 & \mathbf{0}_{n-1} & \\\hline  & \multicolumn{1}{|c}{~}\\ \mbox{\small{n/a}} & \multicolumn{1}{|c}{A^{(i)}}& \\ &  \multicolumn{1}{|c}{~} &\end{array} \right).
 \end{equation}
where ${A}^{(i)}$ is a $(n-1)\times(n-1)$-matrix.\\~\\
\noindent {\it Step 3: derivatives with respect to $u_i$}.
$\frac{\partial x_i}{\partial u}$ with size $n\times (n-1)$ satisfies
\[
\left(\frac{\partial x_i}{\partial u}\right)_{l,m}
= \langle  v_l^{(i)}(r), \tilde{J}_m^{(i)}(r)\rangle _{x_i} ,l=0,\dots,n-1,m=1,\dots,n-1\]
 where $\tilde{J}_m^{(i)}$ is the unique Jacobi field along $\gamma_i$ such that $\tilde{J}_m^{(i)}(0) =0$ and ${\tilde{J}_{m}^{(i)\prime}} (0)= (-1)^i v_m$. Again, the first line of $B^{(i)}$ is identically equal to zero, which means that we can rewrite $B^{(i)}$ as \begin{equation}
  \label{eq:teteBi2}
\frac{\partial x_i}{\partial u}
=\left(\begin{array}{lll} & {\mathbf 0}_{n-1} & \\\hline\\ & {B}^{(i)} & \\ & & \end{array}\right).
\end{equation}
\noindent {\it Step 4: rewriting of the Jacobian determinant.}
We recall that the Jacobian determinant is 
\[\J_{\Phi_1}(z,r,u)= \det \begin{pmatrix} 
\frac{\partial x_0}{\partial r}    & \frac{\partial x_0}{\partial z}& \frac{\partial x_0}{\partial u}\\
&&\\
		\frac{\partial x_1}{\partial r}& \frac{\partial x_1}{\partial z}& \frac{\partial x_1}{\partial u}
\end{pmatrix} \]
Using \eqref{eq:teteEi2}, \eqref{eq:teteAi2}, \eqref{eq:teteBi2} and the exact same permutation of lines as for \eqref{jac3}, we can rewrite the matrix up to a possible minus sign in front of the determininant as the following block lower triangular matrix: 

\[ |\J_{\Phi_1}(z,r,u)|= \det \left( 
 \begin{array}{c@{}|c@{}c}\cline{1-1}\multicolumn{1}{|c|}{\begin{array}{rr} 1 & 1 \\
   -1 & 1
\end{array}} & {\mathbf 0} &~~{\mathbf 0} ~~\\
\cline{1-3}
	\multirow{2}{0cm} {\mbox{n/a}} 
& ~~{A}^{(0)} ~~& \multicolumn{1}{c|}{{B}^{(0)}}\\
 &  {A}^{(1)}  &  \multicolumn{1}{c|}{{B}^{(1)}}\\\cline{2-3}\end{array}
\right)
= 2 \times\det \begin{pmatrix} 
{A}^{(0)}& {B}^{(0)}\\
{A}^{(1)}&{B}^{(1)} \\
\end{pmatrix} = 2 \det D\] 
where $D=\begin{pmatrix} 
{A}^{(0)}& {B}^{(0)}\\
{A}^{(1)}&{B}^{(1)} \\
\end{pmatrix}$.\\

\noindent {\it Step 5: expansion of the coefficients of the determinant.} As in the proof of \eqref{dev}, the required expansion \eqref{eq2} of the determinant follows from the expansion of each of its coefficients. This is done in the following lemma.
\begin{lem}\label{coef}
The coefficients of ${A}^{(i)}$ and ${B}^{(i)}$ satisfy for $l,m=2,\dots,n$
\begin{align*}
 {A}^{(i)}_{(m-1),(m-1)}&= \langle v_m^{(i)}(r), J_m^{(i)}(r)\rangle _{x_i}= 1 - \frac{K_z^{(M)}(u,v_m)}{2}r^2 +o(r^2), \\
{A}^{(i)}_{(l-1),(m-1)}&= \langle v_l^{(i)}(r), J_m^{(i)}(r)\rangle _{x_i}= o(r), ~~l\neq m,\\
{B}^{(0)}_{(m-1),(m-1)}&= \langle v_m^{(0)}(r), \tilde{J}_{m}^{(0)}(r)\rangle _{x_i}= r- \frac{K_z^{(M)}(u,v_m)}{6}r^3 +o(r^3), \\
{B}^{(1)}_{(m-1),(m-1)}&=  \langle v_m^{(1)}(r), J_{m}^{(1)}(r)\rangle _{x_i}=-(r- \frac{K_z^{(M)}(u,v_m)}{6}r^3) +o(r^3), \\
{B}^{(i)}_{(l-1),(m-1)}&= \langle v_l^{(i)}(r), J_m^{(i)}(r)\rangle _{x_i}= o(r^2) ,~~l\neq m.
\end{align*}
\end{lem}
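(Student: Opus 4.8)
The plan is to follow closely the mechanism of the proof of Lemma \ref{diag}: for each relevant pair of indices I introduce the scalar function obtained by pairing the parallel field $v_l^{(i)}(r)$ with the appropriate Jacobi field, and I read off the announced estimates from its Taylor expansion at $0$. Two families of Jacobi fields occur. The coefficients of $A^{(i)}$ involve the field $J_m^{(i)}$ with $J_m^{(i)}(0)=v_m$ and $J_m^{(i)\prime}(0)=0$, while those of $B^{(i)}$ involve $\tilde J_m^{(i)}$ with $\tilde J_m^{(i)}(0)=0$ and $\tilde J_m^{(i)\prime}(0)=(-1)^i v_m$. Throughout I use that $v_l^{(i)}(r)$ is parallel along $\gamma_i$, so that covariant differentiation commutes with the pairing and only the derivatives of the Jacobi field contribute.

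For the matrices $B^{(i)}$ I would avoid any new computation and reduce directly to Lemma \ref{diag}. By linearity and uniqueness of Jacobi fields, $\tilde J_m^{(i)}=(-1)^i\hat J_m^{(i)}$, where $\hat J_m^{(i)}$ is the field along $\gamma_i$ with $\hat J_m^{(i)}(0)=0$ and $\hat J_m^{(i)\prime}(0)=v_m$, which is exactly the type treated in Lemma \ref{diag}. Since the indices $m$ run over $2,\dots,n$, only the vectors $v_m\perp u$ enter, and the geodesic velocity appears solely through the sectional curvature, with $K_z^{(M)}((-1)^i u,v_m)=K_z^{(M)}(u,v_m)$. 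Hence Lemma \ref{diag} applies verbatim (with $u_i$ replaced by $(-1)^i u$) and gives $\langle v_m^{(i)}(r),\hat J_m^{(i)}(r)\rangle=r-\frac{K_z^{(M)}(u,v_m)}{6}r^3+o(r^3)$ on the diagonal and $o(r^2)$ off the diagonal. Multiplying by $(-1)^i$ yields at once the stated expansions for $B^{(0)}$ (with a plus sign) and $B^{(1)}$ (with a minus sign), together with the off-diagonal estimate.

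The only genuinely new calculation concerns $A^{(i)}$, and I would carry it out by a second-order Taylor expansion of $f_{l,m}^{(i)}(r)=\langle v_l^{(i)}(r),J_m^{(i)}(r)\rangle_{x_i}$ with $l,m=2,\dots,n$. Since $v_l^{(i)}$ is parallel, $f_{l,m}^{(i)}(0)=\langle v_l,v_m\rangle=\delta_{l,m}$ and $f_{l,m}^{(i)\prime}(0)=\langle v_l,J_m^{(i)\prime}(0)\rangle=0$. For the second derivative the Jacobi equation \eqref{eqJacobi} gives $f_{l,m}^{(i)\prime\prime}(r)=-\langle v_l^{(i)}(r),\mathcal{R}_{x_i}^{(M)}(J_m^{(i)}(r),\gamma_i'(r))\gamma_i'(r)\rangle$, and evaluating at $0$, where the two factors of $(-1)^i$ coming from $\gamma_i'(0)=(-1)^i u$ cancel, yields $f_{m,m}^{(i)\prime\prime}(0)=-\langle v_m,\mathcal{R}_z^{(M)}(v_m,u)u\rangle=-K_z^{(M)}(u,v_m)$ by the sectional-curvature identity already used in Lemma \ref{diag}. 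Taylor's theorem then gives the diagonal expansion $1-\frac{K_z^{(M)}(u,v_m)}{2}r^2+o(r^2)$; for $l\neq m$ the vanishing of the value and of the first derivative already forces $f_{l,m}^{(i)}(r)=O(r^2)=o(r)$, which is the off-diagonal estimate.

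I do not expect any real obstacle here: the lemma is essentially a repetition of the scheme of Lemma \ref{diag}, and the computations are routine Taylor expansions driven by the Jacobi equation and the parallelism of the $v_l^{(i)}$. The only points requiring care are bookkeeping rather than difficulty, namely tracking the signs $(-1)^i$ produced both by the velocity of $\gamma_i$ and by the initial velocity of $\tilde J_m^{(i)}$, and using that the sectional curvature is insensitive to the orientation of $u$, so that the same $K_z^{(M)}(u,v_m)$ governs the cases $i=0$ and $i=1$.
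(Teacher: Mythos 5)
Your proposal is correct and follows essentially the same route as the paper: the $A^{(i)}$ entries are handled by the identical second-order Taylor expansion of $f_{l,m}^{(i)}(r)=\langle v_l^{(i)}(r),J_m^{(i)}(r)\rangle_{x_i}$ driven by parallelism and the Jacobi equation, while the $B^{(i)}$ entries are reduced to Lemma \ref{diag} (the paper simply declares this case ``very similar if not identical'' and omits it; your explicit reduction via $\tilde J_m^{(i)}=(-1)^i\hat J_m^{(i)}$ and the sign-insensitivity of the sectional curvature is a clean way to make that omission precise).
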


\begin{proof}[Proof of Lemma \ref{coef}] 
We omit the proof for the coefficients of ${B}^{(i)}$ as it is very similar if not identical to the proof of Lemma \ref{diag}. 

Let us show the first two expansions. To do so, we consider the function $f_{l,m}^{(i)}(r)=\langle v_l^{(i)}(r), J_m^{(i)}(r)\rangle _{x_i}$ and apply Taylor's theorem to it at the second order in the neighborhood of 0.

Since $J_m^{(i)}(0)=v_m$, we have
\begin{equation}
  \label{eq:termeconst}
  f_{l,m}^{(i)}(0)=\delta_{l,m}.
\end{equation}
Because $v_l^{(i)}$ is a parallel transport, its derivative is equal to zero and so
  \begin{equation}
    \label{eq:termeordre1}
    f_{l,m}^{(i)\prime}(0)=\langle v_l^{(i)}(0), J_m^{(i)\prime}(0)\rangle _{z}=0.
  \end{equation}
The required estimate of ${A}^{(i)}_{(l-1),(m-1)}$ for $l\ne m$ follows from \eqref{eq:termeconst} and \eqref{eq:termeordre1}.

We calculate now the second derivative of $f_{m,m}^{(i)}$ at 0: since $J_m^{(i)}$ satisfies the Jacobi equation, we get
\begin{equation}
  \label{eq:termeordre2}
f_{l,m}^{(i)\prime\prime}(0)=-\langle v_m^{(i)}(0), {\mathcal R}_z^{(M)}(J_m^{(i)}(0),u)u\rangle _{x_i}=-\langle v_m, {\mathcal R}_z^{(M)}(v_m,u)u\rangle _{z}=-K_z^{(M)}(u,v_m).  
\end{equation}
It remains to combine \eqref{eq:termeconst}, \eqref{eq:termeordre1} for $l=m$ and \eqref{eq:termeordre2} with Taylor's theorem to deduce the required expansion of ${A}^{(i)}_{(m-1),(m-1)}$.
\end{proof}

\noindent{\it Step 6: conclusion.} Let us write the determinant of $D$ as
\begin{align}\label{eq:decompdet2}
 \det D &= \sum_{\sigma \in \mathfrak{S}_{2(n-1)}} \sgn(\sigma) \prod_{j=1}^{2(n-1)} D_{\sigma(j),j}= \sum_{\sigma \in \mathfrak{S}} \sgn(\sigma) \prod_{j=1}^{2(n-1)} D_{\sigma(j),j} +
\sum_{\sigma \in \mathfrak{S}_{2(n-1)}\backslash \mathfrak{S}} \sgn(\sigma) \prod_{j=1}^{2(n-1)} D_{\sigma(j),j}
\end{align}
where $\mathfrak{S}= \{ \sigma \in \mathfrak{S}_{2(n-1)}, \forall j, D_{\sigma(j),j}=  {A}^{(i)}_{m,m} \text{ or } D_{\sigma(j),j}=  {B}^{(i)}_{m,m}, \text{ for some }i,m \}$. 

We expect the contribution of the permutations in $\mathfrak{S}$ to be dominant in the sum above.  
Indeed, using the expansions contained in Lemma \ref{coef}, the fact that the cardinality of $\mathfrak{S}$ is $2^{n-1}$ and the identity \eqref{eq:defric}, we get
\begin{align} \label{eq:partiedom2}\sum_{\sigma \in \mathfrak{S}} \sgn(\sigma) \prod_{j=1}^{2(n-1)} D_{\sigma(j),j} &= 2^{n-1} \prod_{m=2}^{n} ( 1 - \frac{K(u,v_m)}{2}r^2 +o(r^2))( r- \frac{K_z^{(M)}(u,v_m)}{6}r^3 +o(r^3) ) \nonumber\\&= 2^{n-1} (r^{n-1} - \frac{2}{3} \Ric_z^{(M)}(u)r^{n+1} + o(r^{n+1})) \end{align}
Moreover, for any $\sigma\in \mathfrak{S}_{2(n-1)}\backslash \mathfrak{S}$, there exist at least two $j$ such that $D_{\sigma(j),j}$ is equal to ${A}^{(i)}_{m,l}$ or ${B}^{(i)}_{m,l}$ for different $m$ and $l$. Because of Lemma \ref{coef}, this means that the contribution of the product is at least of order $r^{n-2}\times r^2\times r^2$. Consequently, this yields that
\begin{equation}
  \label{eq:partienondom2}
 \lim_{r\to 0}\frac{1}{r^{n+1}}\sum_{\sigma \in \mathfrak{S}_{2(n-1)}\backslash \mathfrak{S}} \sgn(\sigma) \prod_{j=1}^{2(n-1)} D_{\sigma(j)j} =0.
\end{equation}
Inserting \eqref{eq:partiedom2} and \eqref{eq:partienondom2} into \eqref{eq:decompdet2}, we get the required expansion \eqref{eq2}.
\begin{flushright}
$\Box$
\end{flushright}
\section{Proof of Proposition \ref{np}}\label{sec:partie3}
Let us calculate the Jacobian determinant of the function $\Phi_{n-1}$ given in Proposition \ref{np}. To do so, 
we consider an orthonormal basis of $T_zM$, $\mathcal{V} =\{ v_1, \dots,v_{n} \}$ where $v_n=v$. In a very similar way to the proof of Theorem \ref{BP}, we write each vector $u_i$, $0\le i\le (n-1)$ in this basis, i.e.
\[ u_i=   \sum_{j=1}^{n} u_i^j v_j .\]
In particular, we notice that $u_i^n=0$ for every $0\le i\le (n-1)$. For sake of simplicity, we will use in the rest of the paper the notation $u_i$ for the $(n-1)$-dimensional line vector $(u_1^1,\dots,u_1^{n-1})$.

Then for each $i$, we introduce an orthonormal basis of $T_zM$ $\mathcal{V}^{(i)}=\{  v^{(i)}_1, \dots,v^{(i)}_{n} \}$ where $v_1^{(i)}=u_i$ and $v_n^{(i)}=v$, and we do the parallel transportation of this basis $\mathcal{V}^{(i)}(r)=\{  v^{(i)}_1(r), \dots,v^{(i)}_{n}(r) \}$ along the curve $\gamma_i(t)= \exp_z(tu_i)$.\\ 

\noindent {\it Step 1: derivatives with respect to $r$}.
Again,
the column vector 
of the coordinates of $\frac{\partial x_i}{\partial r}$  in the basis $\mathcal{V}^{(i)}(r)$ is 
\begin{equation}
  \label{eq:teteEi3}
\frac{\partial x_i}{\partial r}
=\left(\begin{array}{c}1\\
\hline\\
{{\mathbf 0}_{n-1}}^T
\\~\\
\end{array}
\right). 
\end{equation}
\noindent{\it Step 2: derivatives with respect to $z$.} We consider the submatrix $
\frac{\partial x_i}{\partial z}$. We recall that for $1\le l,m\le n$ 
$$
\left(\frac{\partial x_i}{\partial z}\right)_{l,m}
=\langle v_{l}^{(i)}(r), J_m^{(i)}(r) \rangle_{x_i},$$
where $J_m^{(i)}$ is the unique Jacobi field  along $\gamma_i$ such that $J_m^{(i)}(0)=v_m$ and ${J_m^{(i)\prime}}(0)=0$.\\ 
Arguments similar to Step 2 of the proof of \eqref{Jac} show that 
$\frac{\partial x_i}{\partial z}$
is equal to
\begin{equation}
  \label{eq:teteAi3}
\frac{\partial x_i}{\partial z}
= \left(\begin{array}{lll|l} & u_i^{T} & & \multicolumn{1}{|c}{0}
\\\hline
\multirow{2}{0cm}{ }& \multirow{2}{0cm}{\mbox{n/a}} & \multirow{2}{0cm}{}& \multirow{2}{0cm}{${A}^{(i)}$}\\ & & &
\end{array}\,\right)
\end{equation}
where ${A}^{(i)}$ is a $(n-1)\times 1$-column vector with transpose 
$$\left({{A}}^{(i)}\right)^T=(\langle v_{2}^{(i)}(r), J_n^{(i)}(r) \rangle_{x_i},\dots, \langle v_{n}^{(i)}(r), J_n^{(i)}(r) \rangle_{x_i}).
$$
\noindent {\it Step 3: derivatives with respect to $u_i$}.
We first recall that $u_i$ is a vector from the $(n-1)$-dimensional subspace $\{v\}^\perp$ of $T_zM$. The submatrix $
\frac{\partial x_i}{\partial u_i}$ is then equal to
\begin{equation}
\label{eq:teteBi3}
\frac{\partial x_i}{\partial u_i}
=\left(\begin{array}{lll} & {\mathbf 0}_{n-1} & \\\hline\\ & {B}^{(i)} & \\ & & \end{array}\right).
\end{equation}
where ${B}^{(i)}$ is the $(n-1)\times(n-2)$-matrix such that for every $1\le l \le (n-1)$ and $1\le m\le (n-2)$,
\[{B}^{(i)}_{l,m} = \langle  v_{l+1}^{(i)}(r), \tilde{J}_{m+1}^{(i)}(r)\rangle _{x_i},\]
$\tilde{J}_m^{(i)}$ being unique Jacobi field along $\gamma_i$ such that $\tilde{J}_m^{(i)}(0) =0$ and ${\tilde{J}_{m}^{(i)\prime}} (0)= v_m^{(i)}$.\\~\\
 
\noindent {\it Step 4: derivatives with respect to $v$}. 
 When comparing to the proof of \eqref{Jac}, we observe that this step is new. The submatrix $
\frac{\partial x_i}{\partial v}$ has size $n\times (n-1)$ and $\left( \frac{\partial x_i}{\partial v} \right)_{l,m} $ is the projection onto $v_l^{(i)}(r)$ of the derivative of $x_i$ with respect to $v$ in the direction $v_m$. Let $m,i$ be fixed, $s>0$ and let us define the new basis of $T_zM$, $\mathcal{V}(s) = \{ v_1(s),\dots, v_n(s)=v(s) \}$ by 
\begin{align*}
 v(s)=v_n(s) &:=\frac{ v + sv_m }{\|  v + sv_m\| },\\
v_m(s) &:= \frac{v_m-sv}{ \| v_m-sv \|}, \\
v_j(s)& :=v_j, j\neq m, j\neq n.
\end{align*}
In this new basis, we define the vector $u_i^{s}$ by
\begin{align*}
 \tilde{u_i}({s}) &= \sum_{j=1}^{n-1} u_i^{j}v_j(s) \\
 & = u_i + u_i^{m} ( v_m(s)-v_m ) \\
& =u_i + u_i^m \left( \frac{v_m-sv}{ \| v_m-sv \|} -v_m \right).
  \end{align*}
Now, the derivative of $x_i$ with respect to $v$ in the direction $v_m$, is obtained by applying Lemma \ref{lem:exodocarmo} with $\gamma =\gamma_i$, $c(s) =z$ and $V(s)=\tilde{u_i}({s})$ that is 
  \begin{equation}\label{deriveV}
	 \frac{\partial x_i}{\partial v_m} = \bar{J}_{m}^{(i)}(r)
	\end{equation}
 where $\bar{J}_{m}^{(i)}$ is the unique Jacobi field along $\gamma_i$ such that $\bar{J}_{m}^{(i)}(0) =0$ and ${\bar{J}_{m}^{(i)\prime}} (0)=V'(0)$. We have
\begin{align*}
V'(s) & = u_i^m \left( \frac{-v}{\| v_m-sv\|} - \frac{(v_m-sv)\langle -v,v_m-sv \rangle_z}{\| v_m-sv\|^3} \right)
\end{align*} 
so $V'(0) =  - u_i^m v$. This, with  \eqref{deriveV}, implies that
 , for every $1\le l,m\le (n-1)$, 
\[\left( \frac{\partial x_i}{\partial v} \right)_{l,m} = \langle v_{l}^{(i)}(r), \bar{J}_{m}^{(i)}(r) \rangle_{x_i}, \]
 where $\bar{J}_{m}^{(i)}$ is the unique Jacobi field along $\gamma_i$ such that $\bar{J}_{m}^{(i)}(0) =0$ and ${\bar{J}_{m}^{(i)\prime}} (0)=-u_i^m v$.
As for $\frac{\partial x_i}{\partial u_i}$, the first line of  $\frac{\partial x_i}{\partial v}$ is identically equal to zero. Consequently, we get
\begin{equation}
  \label{eq:teteCi3}
  \frac{\partial x_i}{\partial v}=\left(\begin{array}{lll} & {\mathbf 0}_{n-1} & \\\hline\\ & {C}^{(i)} & \\ & & \end{array}\right)
\end{equation}
where $C^{(i)}$ is a $(n-1)\times(n-1)$-matrix satisfying $C^{(i)}_{l,m}=\langle v_{l+1}^{(i)}(r), \bar{J}_{m}^{(i)}(r) \rangle_{x_i}$.\\~\\
\noindent {\it Step 5: rewriting of the Jacobian determinant.}
Thus, the Jacobian determinant of $\Phi_{n-1}$, $\J_{\Phi_{n-1}}$ is \\
\begin{equation}\label{np2} \J_{\Phi_{n-1}}(z,r,v,u_0,\dots,u_{n-1}) =\det \left(\begin{array}{ccccccc} 
\cline{4-4}
\frac{\partial x_0}{\partial r}&\frac{\partial x_0}{\partial z}&\frac{\partial x_0}{\partial v}&\multicolumn{1}{|c|}{\frac{\partial x_0}{\partial u_0}}&~&~&~\\
\cline{4-4}
&&&&&\\
\cline{5-5}
\frac{\partial x_1}{\partial r}&\frac{\partial x_1}{\partial z}&\frac{\partial x_1}{\partial v}&~&\multicolumn{1}{|c|}{\frac{\partial x_1}{\partial u_1}}&~~ &\text{\bf{0}}\\
\cline{5-5}
&&&&&\\
\vdots&\vdots&\vdots&\text{\bf{0}}~~& &\ddots&~\\
 &&&&&\\
\cline{7-7}
\frac{\partial x_{n-1}}{\partial r}&\frac{\partial x_{n-1}}{\partial z}&\frac{\partial x_{n-1}}{\partial v}&~&~&~&\multicolumn{1}{|c|}{\frac{\partial x_{n-1}}{\partial u_{n-1}}}\\
\cline{7-7}
\end{array}\right).\end{equation}
Considering \eqref{eq:teteEi3}, \eqref{eq:teteAi3}, \eqref{eq:teteBi3} and \eqref{eq:teteCi3}, we can then apply the same permutation of the lines as in the proof of \eqref{Jac} so that the determinant is up to a possible minus sign equal to the determinant 
of a new matrix which is a block lower triangular matrix:
\begin{eqnarray}
\label{jacFin}
|\J_{\Phi_{n-1}}(z,r,v,u_0,\dots,u_{n-1})| &=
|\det \left( \begin{array}{cc|ccccc|}
\cline{1-2}
\multicolumn{1}{|c}{1} & \multicolumn{1}{c|}{u_0^T} & &&&&\multicolumn{1}{c}{}\\
\multicolumn{1}{|c}{\vdots} & \multicolumn{1}{c|}{\vdots} & \multicolumn{5}{c}{{\bf{0}}}\\
\multicolumn{1}{|c}{1} & \multicolumn{1}{c|}{u_{n-1}^T} & &&&&\multicolumn{1}{c}{}\\
\hline &&&&&&\multicolumn{1}{c|}{}\\\cline{5-5}
 & & \multicolumn{1}{|c}{A^{(0)}} & C^{(0)} & \multicolumn{1}{|c|}{B^{(0)}} &  & \multirow{2}{0cm}{\bf{0}}\\
\cline{5-5}
\multicolumn{2}{c}{\mbox{n/a}} & \multicolumn{1}{|c}{\vdots} & \vdots &  \multirow{2}{0cm}{\text{\bf{0}}} & \ddots &\\
\cline{7-7}
 & & \multicolumn{1}{|c}{A^{(n-1)}} & C^{(n-1)} & 

&  &\multicolumn{1}{|c|}{B^{(n-1)}}\\
\cline{7-7} &&&&&&\\
\cline{3-7}
\end{array} \right)|
\nonumber\\ ~\nonumber\\
& \hspace*{-3.6cm}= (n-1)! \Delta_{n-1}(u_0,\dots,u_{n-1}) | \det D |
\end{eqnarray}
where $D$ is the $n(n-1)\times n(n-1)-$matrix given by
\[ D = \begin{pmatrix}  {A}^{(0)} & {C}^{(0)}& {B}^{(0)} & &\multirow{2}{0cm}{\bf{0}}\\
																\vdots&         \vdots&\multirow{2}{0cm}{\bf{0}} & \ddots & \\
												{A}^{(n-1)} & {C}^{(n-1)}& & &{B}^{(n-1)}
			 \end{pmatrix}. \]
\noindent {\it Step 6: expansion of the coefficients of the determinant.} 
In Lemma \ref{lem:coeffpresde0} below, we provide estimates for the coefficients of the matrix $D$ when $r\to 0$.
\begin{lem}\label{lem:coeffpresde0}
The coefficients of $D$ satisfy for $l=2,\dots,n$, $m=2,\dots,(n-1)$
\begin{align}
A^{(i)}_{(n-1),1}&= \langle v_{n}^{(i)}(r), J_{{n}}^{(i)}(r)\rangle_{x_i} = 1-\frac{K_z^{(M)}(u_i,v)}{2}r^2 +o(r^2), \label{coefA1}\\
A^{(i)}_{(l-1),1}&= \langle v_{l}^{(i)}(r), J_{{n}}^{(i)}(r)\rangle_{x_i} = o(r), ~~l\neq n,\label{coefA2}\\
B^{(i)}_{(m-1),(m-1)}&=  \langle v_m^{(i)}(r), \tilde{J}_m^{(i)}(r)\rangle_{x_i} = r-\frac{K_z^{(M)}(u_i,v^{(i)}_m)}{6}r^3 +o(r^3),\label{coefB1}\\
B^{(i)}_{(l-1),(m-1)}&=  \langle v_l^{(i)}(r), \tilde{J}_m^{(i)}(r)\rangle_{x_i} = o(r^2), ~~l\neq m,\label{coefB2}\\
C^{(i)}_{(n-1),m}&= \langle v_n^{(i)}(r), \bar{J}_{m}^{(i)}(r)\rangle_{x_i}= -u^i_m( r-\frac{K_z^{(M)}(u_i,v)}{6}r^3) +o(r^3) ,\label{coefC1}\\
C^{(i)}_{(l-1),m}&= \langle v_l^{(i)}(r), \bar{J}_{m}^{(i)}(r)\rangle_{x_i} = o(r^2),~~l\neq n.\label{coefC2}
\end{align}

\end{lem}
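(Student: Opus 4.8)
The plan is to establish the six expansions by a single, uniform Taylor-expansion argument at $r=0$, identical in spirit to the proofs of Lemma \ref{diag} and Lemma \ref{coef}. For each entry I introduce the scalar function $f_{l,m}^{(i)}(r)=\langle v_l^{(i)}(r), W_m^{(i)}(r)\rangle_{x_i}$, where $W_m^{(i)}$ denotes the relevant Jacobi field: $J_m^{(i)}$ (with $J_m^{(i)}(0)=v_m$, ${J_m^{(i)\prime}}(0)=0$) for $A^{(i)}$, $\tilde{J}_m^{(i)}$ (with $\tilde{J}_m^{(i)}(0)=0$, ${\tilde{J}_m^{(i)\prime}}(0)=v_m^{(i)}$) for $B^{(i)}$, and $\bar{J}_m^{(i)}$ (with $\bar{J}_m^{(i)}(0)=0$, ${\bar{J}_m^{(i)\prime}}(0)=-u_i^m v$) for $C^{(i)}$. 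The key simplification, used throughout, is that $v_l^{(i)}(r)$ is a parallel transport, so ${v_l^{(i)\prime}}(r)=0$ and every derivative of $f_{l,m}^{(i)}$ acts only on the Jacobi field factor. The successive derivatives at $0$ are then read off from the initial data of $W_m^{(i)}$, from the Jacobi equation \eqref{eqJacobi}, and from the third-derivative identity \eqref{eq:derive3}, exactly as in the earlier lemmas.

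First I would dispatch the two already-known families. The estimates \eqref{coefB1}--\eqref{coefB2} for $B^{(i)}$ coincide verbatim with \eqref{eq:termesdiag}--\eqref{eq:termesnondiag}, so I would simply invoke Lemma \ref{diag}. For the column $A^{(i)}$, which involves only $J_n^{(i)}$, I would use $J_n^{(i)}(0)=v_n=v$ and ${J_n^{(i)\prime}}(0)=0$ to get $f_{l,n}^{(i)}(0)=\langle v_l,v\rangle_z=\delta_{l,n}$ and ${f_{l,n}^{(i)\prime}}(0)=0$; the Jacobi equation then gives ${f_{l,n}^{(i)\prime\prime}}(0)=-\langle v_l,\mathcal{R}^{(M)}_z(v,u_i)u_i\rangle_z$, which equals $-K_z^{(M)}(u_i,v)$ when $l=n$ (using $v\perp u_i$) and vanishes otherwise. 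A second-order Taylor expansion yields \eqref{coefA1}--\eqref{coefA2}.

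The genuinely new computation is for $C^{(i)}$, where ${\bar{J}_m^{(i)\prime}}(0)=-u_i^m v$ carries the extra scalar factor $-u_i^m$. Here $f_{l,m}^{(i)}(0)=0$ and ${f_{l,m}^{(i)\prime}}(0)=\langle v_l,-u_i^m v\rangle_z=-u_i^m\delta_{l,n}$, while ${f_{l,m}^{(i)\prime\prime}}(0)=0$ because $\bar{J}_m^{(i)}(0)=0$. Mirroring \eqref{eq:terme3} via \eqref{eq:derive3}, I would compute ${f_{l,m}^{(i)\prime\prime\prime}}(0)=-\langle v_l,\mathcal{R}^{(M)}_z(-u_i^m v,u_i)u_i\rangle_z=u_i^m\langle v_l,\mathcal{R}^{(M)}_z(v,u_i)u_i\rangle_z$, which equals $u_i^m K_z^{(M)}(u_i,v)$ for $l=n$ and is a bounded constant for $l\neq n$. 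Taylor's theorem then produces \eqref{coefC1} for the single nonvanishing row $l=n$, and for $l\neq n$ the vanishing of $f_{l,m}^{(i)}$ together with its first two derivatives gives $f_{l,m}^{(i)}(r)=O(r^3)=o(r^2)$, which is \eqref{coefC2}.

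I do not anticipate any real obstacle: all the analytic machinery is already in place, and the remaining work is the bookkeeping. The two points that demand care are keeping track of the scalar $-u_i^m$ produced by the initial velocity of $\bar{J}_m^{(i)}$ and observing that, since this velocity points along $v=v_n$, the leading contribution of $C^{(i)}$ survives only in the row $l=n$, all other rows being $o(r^2)$; and matching the index shifts between the basis labels $1,\dots,n$ and the matrix labels of $A^{(i)}$, $B^{(i)}$ and $C^{(i)}$ so that the stated positions of the nonzero entries are correct.
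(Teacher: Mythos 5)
Your proposal is correct and takes essentially the same route as the paper, which likewise reduces the $A^{(i)}$ and $B^{(i)}$ estimates to the arguments of Lemmas \ref{diag} and \ref{coef} and only carries out the Taylor expansion of $f^{(i)}_{l,m}(r)=\langle v_l^{(i)}(r),\bar{J}_m^{(i)}(r)\rangle_{x_i}$ for the new family $C^{(i)}$, obtaining exactly your values $f^{(i)}_{l,m}(0)=0$, $f^{(i)\prime}_{l,m}(0)=-u_i^m\delta_{l,n}$, $f^{(i)\prime\prime}_{l,m}(0)=0$ and, for $l=n$, $f^{(i)\prime\prime\prime}_{n,m}(0)=u_i^m K_z^{(M)}(v,u_i)$ via \eqref{eq:derive3}. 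The only (harmless) over-claim is your assertion that for $A^{(i)}$ with $l\neq n$ the second derivative $-\langle v_l^{(i)},\mathcal{R}^{(M)}_z(v,u_i)u_i\rangle_z$ vanishes, which need not hold in a general manifold; this is immaterial because \eqref{coefA2} only asserts $o(r)$, which already follows from $f^{(i)}_{l,n}(0)=f^{(i)\prime}_{l,n}(0)=0$.
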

\begin{proof} 
We only provide a proof for the last two expansions since the proof for the coefficients of $A^{(i)}$ and $B^{(i)}$ is very similar to the proofs of Lemmas \ref{diag} and \ref{coef}. Let us define the function $f^{(i)}_{l,m} (r) =  \langle v_l^{(i)}(r), \bar{J}_{m}^{(i)}(r)\rangle_{x_i}$ and write its third order Taylor expansion. To this end, let us compute the successive derivatives of $f^{(i)}_{l,m} $ at $0$. Since $\bar{J}_{m}^{(i)}(0)=0$, 
\begin{equation}\label{ordre0}
f^{(i)}_{l,m} (0) =0. 
\end{equation}
Because $v_l^{(i)}(r)$ is a parallel transport $v_l^{(i)\prime}(r)=0$ then 
\begin{equation}\label{ordre1}
f^{(i)\prime}_{l,m} (0) = \langle v_l^{(i)}(0), \bar{J}_{m}^{(i)\prime}(0)\rangle_{z} = -u^i_m \delta_{l,n}. 
\end{equation}
The Jacobi field $\bar{J}_{m}^{(i)}$ satisfies the Jacobi equation thus 
\begin{equation}\label{ordre2}
f^{(i)\prime\prime}_{l,m} (0) =  \langle v_l^{(i)}(0), \bar{J}_{m}^{(i)\prime\prime}(0)\rangle_{z} = -\langle v_l^{(i)}(0),\mathcal{R}( \bar{J}_{m}^{(i)}(0), v^{(i)}_1)v^{(i)}_1\rangle_{z}= 0. 
\end{equation} 
Now, \eqref{ordre0}, \eqref{ordre1} and \eqref{ordre2} with Taylor's theorem imply the expansion of $C^{(i)}_{(l-1),m}$, for $l\neq n$. 
For the case $l=n$, let us compute the third derivative of $f^{(i)}_{l,m} $. As in the proof of Lemma \ref{diag}, the identity \eqref{eq:derive3} is satisfied.
This with ${\bar{J}_m^{(i)\prime}}(0)= -u_m^i v$ yields
\begin{align}
f^{(i)\prime\prime\prime}_{n,m} (0) &=  \langle v_n^{(i)}(0), \bar{J}_{m}^{(i)\prime\prime\prime}(0)\rangle_{z} \notag \\
                                   & = -   \langle v_n^{(i)}(0),\mathcal{R}_z^{(M)}({\bar{J}_m^{(i)\prime}}(0), v_1^{(i)}(0))v_1^{(i)}(0) \rangle_{z} \notag \\ 
																	& =  u^i_m \langle v_n^{(i)}(0),\mathcal{R}_z^{(M)}(v_n^{(i)}(0), v_1^{(i)}(0))v_1^{(i)}(0)\rangle_{z} \notag \\
																	& = u^i_m K_z^{(M)}(v,u_i) \label{ordre3}
\end{align}
Combining \eqref{ordre0}, \eqref{ordre1} and \eqref{ordre2} for $l=n$ with \eqref{ordre3} and Taylor's theorem, we obtain the expansion of $C^{(i)}_{(n-1),m}$.  
\end{proof}

\noindent {\it Step 7: expansion of the determinant.} 
To find the expansion of $\det D$, we identify the dominant terms involved in the determinant, that is, we isolate the permutations providing the minimal power of $r$ in the final expansion. To this end, let us define the  set of permutations 
\begin{align*}
 \mathscr{S}& = \{\sigma \in \mathfrak{S}_{n(n-1)}, \\
& D_{\sigma(1),1} = A^{(i)}_{(n-1),1} \text{ for some }i=0,\dots,(n-1),\\
&  D_{\sigma(j),j} = C^{(\tau(j))}_{(n-1),m} \text{ for }j=2,\dots,n, \text{ and }\tau :\{ 2,\dots,n\} \to \{0,\dots,(n-1) \}\backslash \{i\} \text{ is some bijection,}\\ 
& D_{\sigma(j),j} = B^{(k)}_{m,m} \text{ for }j=n+k(n-2)+m\text{ where }k=0,\dots,(n-1),m=1,\dots,(n-2) \} .\end{align*}
Since only the integer $i$ and the bijection $\tau$ have to be chosen, the cardinality of $\mathscr{S}$ is $n\times (n-1)! =n!$ .
 We can then split $\det D$ into two terms 
\begin{align} \det D &=  \sum_{\sigma \in \mathfrak{S}_{n(n-1)}} \sgn(\sigma) \prod_{j=1}^{n(n-1)} D_{\sigma(j),j}\notag \\ 
											&= \sum_{\sigma \in \mathscr{S}} \sgn(\sigma) \prod_{j=1}^{n(n-1)} D_{\sigma(j),j} +\sum_{\sigma \notin \mathscr{S}} \sgn(\sigma) \prod_{j=1}^{n(n-1)} D_{\sigma(j),j} \label{decompDetD}
	\end{align}

Observe that considering 	$\sum_{\sigma \in \mathscr{S}} \sgn(\sigma) \prod_{j=1}^{n(n-1)} D_{\sigma(j),j}$ instead of $\det D$ is equivalent to considering the determinant of a new matrix $\tilde{D}$ with entries $\tilde{D}_{l,m} = D_{l,m}$ as soon as there is a permutation $\sigma \in \mathscr{S}$, such that $\sigma(m)=l$ and $\tilde{D}_{l,m}=0$ otherwise, that is 

\begin{multline}
\sum_{\sigma \in \mathfrak{S}} \sgn(\sigma) \prod_{j=1}^{n(n-1)} D_{\sigma(j),j} = \det \tilde{D}\\
=\det\left(\begin{array}{ccccccccccc}
\cline{1-7}
\multicolumn{1}{|c}{~}&~&~&\multicolumn{1}{c|}{~~}&B^{(0)}_{1,1}&~~&\multicolumn{1}{c|}{\mathbf{0}}&~~&~~&~~\\
\multicolumn{4}{|c|}{\mathbf{0}}
&~~&\ddots&\multicolumn{1}{c|}{~~}&
\multicolumn{4}{c}{\mathbf{0}}
~\\
\multicolumn{1}{|c}{~}&~&~&\multicolumn{1}{c|}{~~}&\mathbf{0}&~~&\multicolumn{1}{c|}{B^{(0)}_{(n-2),(n-2)}}&~~&~~&~~\\
\cline{1-7}
\multicolumn{1}{|c}{A^{(0)}_{(n-1),1}}&C^{(0)}_{(n-1),1}&\cdots&\multicolumn{1}{c|}{C^{(0)}_{(n-1),(n-1)}}&0&\cdots&0&\multicolumn{1}{|c}{~}&~&~~\\
\cline{1-7}
~~&~~&\vdots&~~&~~&~~&~~&\ddots&~~&~~\\
\cline{1-4}\cline{9-11}
\multicolumn{1}{|c}{~}&~&~&\multicolumn{1}{c|}{~~}&~~&~~&~~&~~&\multicolumn{1}{|c}{B^{(n-1)}_{1,1}}&~~&\multicolumn{1}{c|}{\mathbf{0}}\\
\multicolumn{4}{|c|}{\mathbf{0}}
&\multicolumn{4}{c}{\mathbf{0}}
&\multicolumn{1}{|c}{~~}&\ddots&\multicolumn{1}{c|}{~~}\\
\multicolumn{1}{|c}{~}&~&~&\multicolumn{1}{c|}{~~}&~~&~~&~~&~~&\multicolumn{1}{|c}{0}&~~&\multicolumn{1}{c|}{B^{(n-1)}_{(n-2),(n-2)}}\\
\cline{1-4}\cline{9-11}
\multicolumn{1}{|c}{A^{(n-1)}_{(n-1),1}}&C^{(n-1)}_{(n-1),1}&\cdots&\multicolumn{1}{c|}{C^{(n-1)}_{(n-1),(n-1)}}&~~&~~&~~&~~&\multicolumn{1}{|c}{0}&\cdots&\multicolumn{1}{c|}{0}\\
\cline{1-4}\cline{9-11}
\end{array}\right)
\end{multline}  
Now, rearranging the lines so that the lines $(A^{(i)}_{(n-1),1},C^{(i)}_{(n-1),1}, \dots,C^{(i)}_{(n-1),(n-1)}|{\mathbf 0})$ appear in the first $n$ lines of a block upper triangular matrix. Then, up to a possible minus sign
	\[ |\sum_{\sigma \in \mathfrak{S}} \sgn(\sigma) \prod_{j=1}^{n(n-1)} D_{\sigma(j),j} |= |\det(E)| \times |\det(F)| \]
	where $E$ is the $n\times n$-matrix defined by
	\begin{equation} E= 
	 \begin{pmatrix}
	    A^{(0)}_{(n-1),1}& C^{(0)}_{(n-1),1}& \cdots &C^{(0)}_{(n-1),(n-1)} \\
			\vdots & \vdots & ~~&\vdots \\
			A^{(n-1)}_{(n-1),1}& C^{(n-1)}_{(n-1),1}& \cdots &C^{(n-1)}_{(n-1),(n-1)} 
	 \end{pmatrix}
	\end{equation}
	and $F$ is a $n(n-2)\times n(n-2)$-diagonal matrix with diagonal entries 
	\[ ( B^{(0)}_{1,1}, \dots, B^{(0)}_{(n-2),(n-2)}, \dots,  B^{(n-1)}_{1,1}, \dots, B^{(n-1)}_{(n-2),(n-2)} ). \]
Using Lemma \ref{lem:coeffpresde0}, we get expansions of $\det E$ and $\det F$. From \eqref{coefB1}, we have
\begin{equation}\label{detF}
|\det F| = \prod_{i=0}^{n-1} \prod_{m=2}^{n-1} \left( r - \frac{ K_z^{(M)}(u_i, v^{(i)}_m)}{6} r^3 +o(r^3) \right).
\end{equation}
Then from \eqref{coefA1} and \eqref{coefC1}, we have 
\begin{equation}\label{detE}
   |\det E| = r^{n-1}|\det G|
\end{equation}
where 
\[ G = \begin{pmatrix}
         1- \frac{K_z^{(M)}(u_0,v)}2r^2 +o(r^2) & -u_0^{T} \left( 1- \frac{K_z^{(M)}(u_0,v)}6r^2 +o(r^2)\right) \\
				  \vdots & \vdots \\
				1- \frac{K_z^{(M)}(u_{n-1},v)}2r^2 +o(r^2) & -u_{n-1}^{T} \left( 1- \frac{K_z^{(M)}(u_{n-1},v)}6r^2 +o(r^2)\right).
       \end{pmatrix} 
	\]
In order to obtain the expansion of $|\sum_{\sigma \in \mathscr{S}} \sgn(\sigma) \prod_{j=1}^{n(n-1)} D_{\sigma(j),j} |$, it remains to expand $\det G$.
\begin{lem} \label{lem:detG} The determinant of $G$ has the following expansion
\[ \det G= (n-1)!\Delta_{n-1}(u_0, \dots, u_{n-1})( 1 - r^2\mathrm{Tr(\Delta^{-1}K)} +o(r^2) )\]
where 
\[\Delta=  \begin{pmatrix} 1& -u_0^{T} \\
																			\vdots &\vdots \\
																			1 & -u_{n-1}^{T}
																			\end{pmatrix}   \text{ and } K= \begin{pmatrix} \frac{K_z^{(M)}(u_0,v)}{2}& u_0^{T}\frac{K_z^{(M)}(u_0,v)}{6} \\
																			\vdots &\vdots \\
																			\frac{K_z^{(M)}(u_{n-1},v)}{2} & u_{n-1}^{T}\frac{K_z^{(M)}(u_{n-1},v)}{6}
																			\end{pmatrix}.   \]

\end{lem}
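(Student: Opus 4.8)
The plan is to treat $\det G$ as a perturbation of the constant matrix $\Delta$ and extract the first nontrivial order in $r$. First I would record the base point: setting $r=0$ in $G$ gives exactly $\Delta$, so by the same simplex--determinant identity used at the end of Step~5 (the analogue, for $u_0,\dots,u_{n-1}\in\{v\}^\perp$, of $n!\Delta_n=\det(1\mid u_i^T)$), one has $\det\Delta=\pm(n-1)!\,\Delta_{n-1}(u_0,\dots,u_{n-1})$, the overall sign being immaterial since only $|\det D|$ ultimately enters \eqref{jacFin}. Thus the leading coefficient is already correct, and the entire content of the lemma is the $r^2$ correction.

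Next I would read off the $r^2$-coefficient of each entry of $G$ directly from the displayed expansions of its rows. In row $i$, the first column contributes $-\tfrac{K_z^{(M)}(u_i,v)}{2}$, coming from \eqref{coefA1}, while the remaining $(n-1)$ columns contribute $u_i^T\,\tfrac{K_z^{(M)}(u_i,v)}{6}$, coming from \eqref{coefC1}. Assembling these row by row shows that $G=\Delta - r^2K + o(r^2)$, with $K$ the matrix of the statement; the delicate bookkeeping is matching the factor $\tfrac12$ in the first column against the factor $\tfrac16$ in the others, and keeping track of the signs so that the whole perturbation collapses to the single matrix $-r^2K$. The error $o(r^2)$ here is entrywise and uniform on a fixed neighbourhood of the configuration.

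Finally, for almost every configuration the vectors $u_0,\dots,u_{n-1}$ are affinely independent, i.e. $\Delta_{n-1}(u_0,\dots,u_{n-1})\neq 0$, so $\Delta$ is invertible. I would then factor $\det G=\det\Delta\cdot\det\!\left(I_n - r^2\Delta^{-1}K + o(r^2)\right)$ and apply the first-order determinant expansion $\det(I_n+\varepsilon N)=1+\varepsilon\,\mathrm{Tr}(N)+O(\varepsilon^2)$ with $\varepsilon=r^2$ and $N=-\Delta^{-1}K$. This yields $\det G=(n-1)!\,\Delta_{n-1}(u_0,\dots,u_{n-1})\left(1-r^2\,\mathrm{Tr}(\Delta^{-1}K)+o(r^2)\right)$, which is the claim. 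To keep the remainder honest I would note that $\Delta^{-1}$ is bounded on a neighbourhood of the fixed configuration, so the entrywise $o(r^2)$ errors in $G$ propagate to an $o(r^2)$ error in the determinant.

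The main obstacle is the middle step: correctly assembling the two column blocks of the $r^2$-coefficient into the single matrix $K$ with the right signs and the right constants $\tfrac12,\tfrac16$, so that the perturbation is exactly $-r^2K$ and no cross terms survive. Once that identification is in place, the trace formula for $\det(I_n+\varepsilon N)$ closes the argument at once. Should the direct identification prove error-prone, an alternative is to expand $\det G$ by multilinearity in the rows, evaluating each single-row-perturbed determinant through the cofactor relation $\det(\Delta,\ \text{row }i\to e_0)=(\Delta^{-1})_{0,i}\det\Delta$ together with the similarity invariance $\mathrm{Tr}(\Delta^{-1}D_\kappa\Delta)=\sum_i K_z^{(M)}(u_i,v)$; this route reaches the same trace but is notably messier.
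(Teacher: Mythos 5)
Your argument is essentially identical to the paper's proof: the paper likewise writes $G=\Delta-r^2K+o(r^2)$, invokes the invertibility of $\Delta$ for almost every configuration together with the first-order expansion of the determinant to get $\det G=\det\Delta\,(1-r^2\mathrm{Tr}(\Delta^{-1}K))+o(r^2)$, and identifies $\det\Delta$ with $(n-1)!\,\Delta_{n-1}(u_0,\dots,u_{n-1})$ via the simplex-volume identity. Your only additions are cosmetic — making the $\det(I+\varepsilon N)$ step explicit and noting that the sign of $\det\Delta$ is immaterial since only $|\det D|$ enters \eqref{jacFin} — so the proposal is correct and matches the paper's route.
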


\begin{proof}[ Proof of Lemma \ref{lem:detG}]
The matrix $G$ can be written as $G= \Delta - r^2 K + o(r^2)$ with 
\[\Delta=  \begin{pmatrix} 1& -u_0^{T} \\
																			\vdots &\vdots \\
																			1 & -u_{n-1}^{T}
																			\end{pmatrix}   \text{ and } K= \begin{pmatrix} \frac{K_z^{(M)}(u_0,v)}{2}& u_0^{T}\frac{K_z^{(M)}(u_0,v)}{6} \\
																			\vdots &\vdots \\
																			\frac{K_z^{(M)}(u_{n-1},v)}{2} & u_{n-1}^{T}\frac{K_z^{(M)}(u_{n-1},v)}{6}
																			\end{pmatrix}   \]
Since, $\Delta$ is an invertible matrix for almost every $(u_0,\dots,u_{n-1})$, the expansion of the determinant function yields 
 \begin{equation}\label{eqDetG}
\det G = \det \Delta ( 1-r^2 \mathrm{Tr}(\Delta^{-1}K)) +o(r^2)
\end{equation}
where $\mathrm{Tr}$ denotes the trace operator. 
Now, observe that $\det \Delta$ is $(n-1)!$ times the volume of the $(n-1)$-dimensional simplex spanned by the vectors $-u_0,\dots,-u_{n-1}$ which has same volume as the simplex spanned by $u_0, \dots, u_{n-1}$ that is 
\begin{equation} \label{detDelta}\det \Delta  = (n-1)!\Delta(u_0, \dots, u_{n-1}) .\end{equation}
Inserting \eqref{detDelta} in \eqref{eqDetG}, we obtain the required expansion of $\det G$. 
 \end{proof}
By Lemma \ref{lem:detG}, \eqref{detF} and \eqref{detE}, we obtain 
\begin{align}
 |\sum_{\sigma \in \mathscr{S}}& \sgn(\sigma) \prod_{j=1}^{n(n-1)} D_{\sigma(j),j} |\notag \\
& = r^{n-1}(n-1)!\Delta(u_0, \dots, u_{n-1})( 1 - r^2 \mathrm{Tr(\Delta^{-1}K)} +o(r^2) ) \prod_{i=0}^{n-1}\prod_{m=2}^{n-1} (r- \frac{K_z^{(M)}(u_i,v_m^{(i)})}{6} r^3 +o(r^3) )\notag \\
&=  (n-1)! \Delta(u_0, \dots, u_{n-1}) \left( r^{n(n-1)-1} - r^{n(n-1)+1}( \sum_{i=0}^{n-1} \Ric_z^{v}(u_i) +\mathrm{Tr}( \Delta^{-1}K) ) +o( r^{n(n-1)+1}) \right) \label{detDsum1}
\end{align}
where $\Ric_{z}^{v}(u_i) = \sum_{m=2}^{n-1} K(u_i,v^{(i)}_m)=\Ric_{z}(u_i)-K_z^{(M)}(u_i,v)$. To conclude for the expansion of $\det D$, it suffices now to prove that 
the second sum in \eqref{decompDetD} has a negligeable contribution, that is
\begin{equation}\label{LimNeg} \lim_{r\to 0} \frac{1}{ r^{n(n-1)+1}} \sum_{\sigma \notin \mathscr{S}} \sgn(\sigma) \prod_{j=1}^{n(n-1)} D_{\sigma(j),j} =0 .\end{equation}
Indeed, let $\sigma \notin \mathscr{S}$. Then, there exists at least two indices $j$ such that $D_{\sigma(j),j}= A^{(i)}_{k,1}$ or $C^{(i)}_{k,m}$ or $B^{(i)}_{l,m}$ for some $k\neq n-1$, $l\neq m$. As in the proof of \eqref{dev}, because of Lemma \ref{lem:coeffpresde0}, the product $\prod_{j=1}^{n(n-1)} D_{\sigma(j),j}$ is at least of order $r^{n(n-1)+2}$ thus \eqref{LimNeg} holds. 
Combining \eqref{detDsum1} and \eqref{LimNeg}, we obtain 
\[ |\det D| =  (n-1)! \Delta(u_0, \dots, u_{n-1}) \left( r^{n(n-1)-1} - r^{n(n-1)+1}( \sum_{i=0}^{n-1} \Ric_z^{v}(u_i) +\mathrm{Tr}( \Delta^{-1}K) ) +o( r^{n(n-1)+1}) \right).\]
which inserted in \eqref{jacFin} concludes the proof of Proposition \ref{np}. 
\begin{flushright}
$\Box$
\end{flushright}

\section*{Concluding remarks}
 
\begin{enumerate}
\item General Blaschke-Petkantschin formula

Theorem \ref{BP} and Propositions \ref{deux} and \ref{np} provide asymptotic Blaschke-Petkantschin formulas for a $m$-tuple of points where $m=n+1$, 2 and $n$ respectively. Determining similar estimates for general $m$ is still open. The general method developed in this paper, goes along the following steps:\\
- the choice of suitable bases for the respective tangent spaces of the $m$ points and for the Grassmannian,\\
- the writing of the Jacobian matrix in function of several Jacobi fields,\\
- the expansion of each entry of the matrix with the Jacobi equation,\\
- the expansion of the Jacobian determinant by elementary row and column operations.\\

This could be applied in principle to any $m$. However, the practical difficulty comes from the calculations of the partial derivatives with respect to the vector space which belongs to the Grassmanniann. We would require a simple analytic way to describe the elements of the Grassmannian and rewrite the associated Haar measure, only it seems out of reach. For that matter, the classical Blachke-Petkantschin formula in the Euclidean space was not derived via a direct analytic calculation of the Jacobian but through the use of a classical Fubini-type exchange formula for the Haar measures on Grassmannians, see e.g. \cite[Theorem 7.1.1]{Wei08}, combined with an induction argument, see \cite[Theorem 7.2.1]{Wei08}. Moreover, when it becomes possible to make explicit the Haar measure as in the case $m=n$, we observe that it adds extra Jacobi fields as $\min(m,n+1-m)$ increases and that the final expansion of the determinant involves a more and more intricate second term which does not seem to have a simple geometrical meaning.

\item Uniformity of the expansions

When $M$ is a compact Riemannian manifold, it is possible to show that the expansion of the Jacobian determinant provided by \eqref{dev} is uniform with respect to $z, u_0,u_1,\dots,u_n$. To do so, we need the two following steps:\\
- check carefully that each remainder in the expansions of the coefficients in Lemma \ref{diag} is uniform,\\
- show that this uniformity of the remainder is preserved during the calculation of the determinant.

The first step requires in particular to rewrite a Taylor expansion with a Lagrange-type remainder for each scalar product involving a Jacobi field.

The uniformity of the Jacobian expansion plays a key role in the on-going work \cite{CCElimite} on mean asymptotics for a Poisson-Voronoi tessellation in $M$, when  applying a Blaschke-Petkantschin change of variables and integrating over $u_0,\dots,u_n$ and/or over $z$.  

\end{enumerate}
\noindent{\bf Acknowledgements.} The author thanks Pierre Calka and Nathana\"el Enriquez, her PhD advisors, for helpful discussions, comments and suggestions. This work was partially supported by the French ANR grant PRESAGE (ANR-11-BS02-003), the French research group GeoSto (CNRS-GDR3477) and the Institut Universitaire de France.

\bibliography{BiblioVoronoi}
\bibliographystyle{alpha}
\end{document}